\newtheorem{thm}{Theorem}[section]
\newtheorem{lem}{Lemma}[section] 
\newcommand{\R}{{\mathbb R}}
\newcommand{\N}{{\mathbb N}}
\newcommand{\Z}{\mathbb Z}
\newtheorem{thmx}{Theorem}
\begin{document}

\title{Global dynamics of a novel delayed logistic equation arising from cell biology}

\author{Ruth E. Baker        \and
	Gergely R\"ost 
}


\institute{Ruth E. Baker \at
	Mathematical Institute, University of Oxford, UK            
	\and
	Gergely R\"ost \at
	Mathematical Institute, University of Oxford, UK\\
	\& Bolyai Institute, University of Szeged, Hungary\\
	\email{rost@math.u-szeged.hu}}
\date{\today}
\maketitle

\begin{abstract} 

 The delayed logistic equation (also known as Hutchinson's equation or Wright's equation) was originally introduced to explain oscillatory phenomena in ecological dynamics. While it motivated the development of a large number of mathematical tools in the study of nonlinear delay differential equations, it also received criticism from modellers because of the lack of a mechanistic biological derivation and interpretation. Here we propose a new delayed logistic equation, which has clear biological underpinning coming from cell population modelling. This nonlinear differential equation includes terms with discrete and distributed delays. The global dynamics is completely described, and it is proven that all feasible nontrivial solutions converge to the positive equilibrium. The main tools of 
the proof rely on persistence theory, comparison principles and an $L^2$-perturbation technique. Using local invariant manifolds,
a unique heteroclinic orbit is constructed that connects the unstable zero and the stable positive equilibrium, and we show that these three complete orbits constitute the global attractor of the system. Despite global attractivity, the dynamics is not trivial as we can observe long-lasting transient oscillatory patterns of various shapes. We also discuss the biological implications of these findings and their relations to other logistic type models of growth with delays.
\end{abstract}

\medskip
{\bf Keywords}: cell population, logistic growth, go or grow models, time delay, \\ 
global attractor, stability, long transient.

\medskip
{\bf Mathematical Subject Classfication:}  92D25, 34K20.

\baselineskip=18pt


\section{Introduction}

The well known logistic differential equation $N'(t)=r N(t)\left(1-N(t)/K\right)$ was proposed by Verhulst in 1838 to resolve the Malthusian dilemma of unbounded growth \cite{bacaer}. Here $N(t)$ represents the population density at time $t$, $r$ is referred to as the intrinsic growth rate, and $K$ as the carrying capacity. Generally, such saturating population growth can be achieved by assuming that an increase in the population size leads to a decrease in fertility or an increase in mortality. This is reasonable when resources are limited: if the population size exceeds some critical level, the habitat cannot support further growth. The logistic equation has seen widespread use in modelling studies across biology with applications in single bacteria, cell and animal populations, and also interacting populations, cancer, and infectious diseases. 

In the framework of the logistic ordinary differential equation, the population always converges to a stable steady state. However, in the context of ecology, oscillatory behaviours have also been observed, even in the absence of external periodic forcing. For example, in Daphnia populations, oscillations are possible because the fertility
of females depends not merely on the actual population density, but also on the past densities to which it has been exposed. This motivated Hutchinson in 1948  \cite{hutchinson} to propose the
 delayed logistic equation (also known as Hutchinson's equation)
 \begin{equation}
 v'(t)=\alpha v(t)\left[1-v(t-\tau)\right],
 \end{equation}
where we have normalized the carrying capacity to unity (by setting $v(t)=N(t)/K$), and introduced the time delay $\tau>0$.
Interestingly, independent of Hutchinson, in 1955 Wright \cite{wright}, motivated by an unpublished note of Lord Cherwell about a heuristic approach to the density of prime numbers, studied in detail the delay differential equation
	 \begin{equation}
	u'(t)=-\alpha u(t-\tau)\left[1+u(t)\right].
	 \end{equation}
Wright's equation is an equivalent form of the delayed logistic equation, established via the change of variables $u(t)=v(t)-1$.
	
Since the delayed logistic equation exhibits stable periodic solutions whenever $\alpha \tau >\pi/2$, it is very tempting to use this modification of the logistic ordinary differential equation to explain observed biological oscillations. One example is the work of May fitting the delayed logistic equation to the empirical data of oscillatory blowfly populations from Nicholson's experiments \cite{may}. However, the use of Hutchinson's delayed logistic equation received criticisms from biological modellers due to the lack of a mechanistic biological derivation \cite{GeritzKisdi}, the main objection being that it is not based on clearly defined birth and death terms, and the delay was inserted on a purely phenomenological basis. Indeed, 
in May's work the parameters in the mathematical model did not directly correspond to any biological parameters. 

Later, Gurney et al.~\cite{gurney} proposed the equation 
	 \begin{equation}
	 N'(t)=aN(t-\tau)e^{-bN(t-\tau)}-\mu N(t),
	  \end{equation}
which is known today as Nicholson's blowfly equation. It is structurally different from the delayed logistic equation, and the terms have clear biological interpretation as the birth and mortality rates, while the delay represents the period between egg laying and hatching.
Furthermore, in contrast to the delayed logistic equation, Nicholson's blowfly equation can explain not only the appearance of cycling populations, but also the two bursts of reproductive activity per adult population cycle that appeared in Nicholson's data.

Incorporating a delay in the growth term, Arino et al. \cite{arino} derived an alternative formulation of the delayed logistic equation
	 \begin{equation}
	N'(t)=\frac{\gamma\mu N(t-\tau)}{\mu e^{\mu\tau}+\kappa(e^{\mu\tau}-1) N(t-\tau)}-\mu N(t)-\kappa N(t)^2,
	 \end{equation}
which has recently been extended to distributed delays \cite{lin}. This model behaves similarly to the classical logistic equation, in the sense that it cannot sustain periodic oscillations, however large delays cause the extinction of the population. In another recent work, Lindstr\"om studied chemostat models with time lag in the conversion of the substrate into biomass \cite{lindstrom}, and obtained equations of logistic type with delays as a limiting case. Lindstr\"om's equation shares the same dynamical properties as the alternative delayed logistic equation from \cite{arino}, both generating a monotone semiflow, unlike Hutchinson's equation.

In addition to these biological applications, the delayed logistic equation motivated the development of a large number of analytical and topological tools, including local and global Hopf-bifurcation analysis for delay differential equations \cite{chow,faria,hassard,nussbaum}, asymptotic analysis \cite{fowler}, 3/2-type stability criteria \cite{ivanov,wright}, and the study of slowly oscillatory solutions \cite{lessard}. The most famous related problem is Wright's conjecture from 1955, which proposes global asymptotic stability in the delayed logistic equation for $\alpha \tau \leq \pi/2$. This long-standing question has been resolved recently by combining analytical and computational tools \cite{krisztin,vBJ}. Moreover, many variants and generalizations of the delayed logistic equation have been studied in the mathematical literature, for instance 
equation including positive instantaneous feedback \cite{gynr}, non-autonomous terms \cite{faria}, neutral terms \cite{gopalsamy}, spatial diffusion \cite{zou}, and multiple delays \cite{yan}. Further examples are discussed in \cite{kuang,liz,ruan}. 

In this paper we derive a novel type of delayed logistic equation, inspired by cell biology. Our equation includes both terms with and without delays, as well as with distributed delays. After presenting derivation of the model in Section 2, we study its basic mathematical properties in Section 3, such as well-posedness, existence and stability of equilibria. The global dynamics is fully described in Section 4, and we provide a complete description of the global attractor. While all non-trivial solutions converge to the positive equilibrium, the model exhibits long-lasting transient oscillations of various shapes; this phenomenon is discussed in Section 5. Finally, we summarize our findings and discuss their implications for future research in Section 6.


\section{Derivation of the new delayed logistic equation}

We derive our new delayed logistic equation by considering the mean-field limit of a simplistic agent-based, on-lattice model that describes both cell motility and proliferation, and cell-cell interactions through the incorporation of volume exclusion. More specifically, we assume that agents move and proliferate on an $n$-dimensional square lattice with spacing $\Delta$ and length (in each direction) $\ell\Delta$, where $\ell$ is an integer describing the number of lattice sites.   A great interest in current medical research is to understand the roles of two particular phenotypes, namely ``high proliferation-low migration'' and ``low proliferation-high migration'', in the progression of agressive cancers \cite{levchenko}. Our model is designed to capture the ``go or grow'' hypothesis frequently acknowledged in the cancer literature \cite{farin,giese}, which proposes that cell proliferation and migration are temporally exclusive events. 

As such, we divide our agent population into two subpopulations, motile and proliferative. Each agent is assigned to a lattice site, from which it can move or proliferate into an adjacent site. If a motile agent attempts to move into a site that is already occupied, the movement event is aborted. Similarly, if a proliferative agent attempts to proliferate into a site that is already occupied, the proliferation event is aborted. Agents convert from being motile to proliferative at constant rate $r>0$ per unit time. That is, $r\delta{t}$ is the probability that a motile agent switches to become proliferative in the next infinitesimally small time interval $\delta{t}$. Upon switching, agents remain immobile for a fixed time $\tau>0$ (representing the length of time taken for cells to progress through the cell cycle to division), and they then attempt to proliferate by placing a daughter agent into one of the nearest neighbour lattice sites. After attempting proliferation, proliferative agents switch back to being motile. The initial agent distribution is, on average, spatially uniform, and is achieved by populating lattice sites uniformly at random with probability $P_s$. 

Following arguments presented in Baker and Simpson~\cite{Baker:2010:CMF}, and closing the hierarchy of moment equations at the pair level (assuming that the occupancy of neighbouring lattice sites is independent), we can derive a system of delay differential equations that describe how the density of agents on the lattice evolves over time:
\begin{eqnarray}
\label{equation:m}
m'(t)&=&-rm(t)+rm(t-\tau)+rm(t-\tau)\left(\frac{K-p(t)-m(t)}{K}\right),\\
p'(t)&=&rm(t)-rm(t-\tau),
\label{equation:p}
\end{eqnarray}
where $m$ is the density of motile agents, $p$ is the density of proliferative agents, $K>0$ is the number of sites on the lattice (one can think of the carrying capacity as $K=\ell^n$), and $'$ denotes differentiation with respect to $t$. The equations encode the model assumption that motile agents become proliferative with rate $r$, and stay in this proliferative phase for time $\tau$. Once the cell cycle is completed, proliferative cells switch back to being motile cells again. As they switch, they attempt to place a daughter agent into a lattice site; the probability that site is empty (in the mean-field limit) is $\left(K-p(t)-m(t)\right)/K$.

System~\eqref{equation:m}--\eqref{equation:p} can be rewritten as a scalar equation using the natural biological relation 
\begin{equation}
p(t)=\int_{t-\tau}^{t}rm(s)\,\text{d}s,
\end{equation}
that is, the proliferative cells at time $t$ are exactly those who entered the proliferative subpopulation in the time interval $[t-\tau,t]$. Using this relation in equation~\eqref{equation:m} gives
\begin{equation}
m'(t)=-rm(t)+rm(t-\tau)+rm(t-\tau)\left(1-\frac{r}{K}\int_{t-\tau}^t m(s) \,\text{d}s-\frac{m(t)}{K}\right).
\end{equation}
Next we rescale the density with $K$ by letting $\tilde{m}(t)=m(t)/K$,
\begin{equation}
\tilde{m}'(t)=-r \tilde m(t)+r \tilde m(t-\tau)+r\tilde m(t-\tau)\left(1-r \int_{t-\tau}^t \tilde m(s) \,\text{d}s-\tilde m(t)\right),
\end{equation}
and drop the tildes and rearrange to
\begin{equation}
m'(t)=-r m(t)+r m(t-\tau)\left(2-r \int_{t-\tau}^t m(s)\,\text{d}s-m(t)\right). 
\label{eq:resc1}
\end{equation}
 To complete the non-dimensionalisation, we also rescale time by letting $\tilde{t}=t/\tau$ and $x(\tilde{t})=m(t)$.
Then, ${\text{d}x}/{\text{d}\tilde t}=\tau m'(t)$ and, moreover, $m(t-\tau)=m(\tau \tilde t-\tau)=m(\tau(\tilde t-1))=x(\tilde t -1)$. Noting that
\begin{equation}
\int_{t-\tau}^t m(s)\,\text{d}s=\tau \int_{\tilde t-1}^{\tilde t }x(\tilde{s}) \,\text{d}\tilde s,
\end{equation}
we can, again, drop the tildes, to arrive at the new delayed logistic equation:
\begin{equation}
\frac{\text{d}x}{\text{d}t}=- r \tau x(t)+r\tau x( t-1)\left(2-r\tau  \int_{t-1}^t x( s) \,\text{d}s-x(t)\right). 
\label{eq:resc2}
\end{equation}


\section{Basic mathematical properties} 

With the notation $\rho=r\tau>0$, we now analyse the scalar differential equation with discrete and distributed delays
\begin{equation} 
x'(t)=-\rho x(t)+\rho x(t-1)\left(2-\rho \int_{t-1}^t x(s) \,\text{d}s-x(t)\right). 
\label{eq}
\end{equation}
Let $C=C([-1,0],R)$ denote the Banach space of continuous real-valued functions on the interval $[-1,0]$ equipped with the supremum norm. Then equation \eqref{eq} is of the form $x'(t)=f(x_t)$, where the solution segment $x_t \in C$ is defined by 
\begin{equation} 
x_t(s):=x(t+s), \quad s \in [-1,0]. 
\end{equation}
Then the map $f: C \to R$  is given by
\begin{equation} 
f(\phi)=-\rho\phi(0)+\rho \phi(-1)\left(2-\rho \int_{-1}^0 \phi(s) \,\text{d}s-\phi(0) \right), \label{f} 
\end{equation}
and the initial data is specified by
\begin{equation} 
x_0=\phi \in C. \label{iv} 
\end{equation}


\subsection{Well-posedness}
By a solution of equations~\eqref{eq},\eqref{iv} we mean a continuous function $x(t)$ on an interval $[-1,A)$ with $0<A\leq \infty$, which is differentiable on $(0,A)$, satisfies equation~\eqref{eq} on $(0,A)$ and also satisfies equation~\eqref{iv}.

\begin{lem} For every $\phi \in C$, there exists a unique solution of the initial value problem~\eqref{eq},\eqref{iv} defined on an interval $[-1,A)$ for some $0<A\leq \infty$, that depends continuously on the initial data.
\end{lem}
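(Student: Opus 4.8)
The plan is to prove existence, uniqueness, and continuous dependence via the classical method of steps, exploiting the fact that the right-hand side $f$ is only explicitly delay-dependent through $x(t-1)$ and the integral over $[t-1,t]$, while the instantaneous dependence on $x(t)$ is a smooth (indeed quadratic) nonlinearity. First I would observe that $f:C\to\R$ defined in \eqref{f} is continuous and, in fact, locally Lipschitz on $C$: the only nonlinear contributions are the product $\phi(-1)\phi(0)$ and the product $\phi(-1)\int_{-1}^0\phi(s)\,\text{d}s$, each of which is a bounded bilinear form composed with continuous evaluation/integration functionals on $C$, so on any ball $\{\|\phi\|\le M\}$ one gets $|f(\phi)-f(\psi)|\le L(M)\|\phi-\psi\|$ by adding and subtracting cross terms in the usual way. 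With this established, local existence and uniqueness follow from the standard theory for retarded functional differential equations (e.g.\ the Picard--Lindel\"of argument of Hale--Verduyn Lunel), and continuous dependence on $\phi$ follows from the Lipschitz estimate together with Gr\"onwall's inequality.

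Alternatively, and perhaps more transparently for this specific equation, I would carry out the \emph{method of steps} explicitly. On the first interval $t\in[0,1]$ the terms $x(t-1)=\phi(t-1)$ and $\int_{t-1}^{t}x(s)\,\text{d}s=\int_{t-1}^{0}\phi(s)\,\text{d}s+\int_0^t x(s)\,\text{d}s$ are determined by the known initial datum plus an integral of the unknown on $[0,t]$, so \eqref{eq} reduces to an ordinary differential equation of the form
\begin{equation}
x'(t)=a(t)-\rho x(t)-\rho\phi(t-1)x(t),
\end{equation}
where $a(t)$ collects the explicitly known (but $x$-dependent through its own integral) terms. Treating $g(t):=\int_0^t x(s)\,\text{d}s$ as an auxiliary variable converts this into a planar ODE system with continuous, locally Lipschitz right-hand side, to which the Picard--Lindel\"of theorem applies directly, yielding a unique $C^1$ solution on $[0,1]$. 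Iterating the construction on $[1,2]$, $[2,3]$, and so on, using the solution already built on the previous interval as the new history, produces a unique solution on a maximal interval $[-1,A)$.

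The continuous dependence statement I would then obtain by comparing two solutions $x,y$ with initial data $\phi,\psi$: subtracting the integrated forms of \eqref{eq}, bounding the difference of the nonlinear terms by the local Lipschitz constant on a common a priori bound valid on a fixed compact time interval, and applying Gr\"onwall to conclude $\|x_t-y_t\|\le C(t)\|\phi-\psi\|$. The main obstacle, though a mild one, is handling the distributed-delay term: unlike a pure discrete delay, the integral $\int_{t-1}^t x(s)\,\text{d}s$ couples present and recent past, so one must be careful to organize the estimates so that on each step the ``new'' contribution $\int_0^t x\,\text{d}s$ is treated as part of the unknown (hence the auxiliary-variable reformulation) rather than as given data. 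Once that bookkeeping is set up, the Lipschitz bound and the step-by-step continuation are routine, and the maximal-interval and blow-up alternative follow in the standard way.
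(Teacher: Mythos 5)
Your first paragraph is precisely the paper's argument: verify that $f$ is locally Lipschitz on bounded sets of $C$ by adding and subtracting cross terms in the bilinear contributions $\phi(-1)\phi(0)$ and $\phi(-1)\int_{-1}^0\phi(s)\,\text{d}s$, and then invoke the standard existence, uniqueness and continuous-dependence theory for retarded functional differential equations (the paper cites Theorem 3.7 of Smith and Kuang, Chapter 2). The method-of-steps reformulation you offer as an alternative is also sound but is not needed once the Lipschitz property is in hand, so the proposal is correct and essentially coincides with the paper's proof.
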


\begin{proof}
Recall the standard existence and uniqueness theorem for functional differential equations \cite{smith}.  We shall verify that the local Lipschitz property holds for $f$, \textit{i.e.} for any $M>0$, there is an $L>0$ such that
\begin{equation}
|f(\phi)-f(\psi)| \leq L ||\phi-\psi||, \text{ whenever } ||\phi||<M,\,\,||\psi||<M.
\end{equation}
For a constant $M>0$, let $||\phi||<M$ and $||\psi||<M$, then we have the estimates
\begin{eqnarray}
\nonumber
|f(\phi)-f(\psi)|&\leq& 
\rho ||\phi-\psi||+2 \rho ||\phi-\psi|| \vphantom{\int_{-1}^0}\\
\nonumber
&&+\rho \left\vert \phi(-1)\left(\rho \int_{-1}^0 \phi(s) \,\text{d}s+\phi(0) \right) - \psi(-1)\left(\rho \int_{-1}^0 \psi(s) \,\text{d}s+\psi(0) \right) \right\vert \\
\nonumber
&\leq& 3\rho  ||\phi-\psi|| \vphantom{\int_{-1}^0}\\
\nonumber
&&+\rho\left\vert \phi(-1)\left(\rho \int_{-1}^0 \phi(s) \,\text{d}s+\phi(0) \right) - \psi(-1)\left(\rho \int_{-1}^0 \phi(s) \,\text{d}s+\phi(0) \right)\right\vert\\
\nonumber
&&+\rho \left\vert \psi(-1)\left(\rho \int_{-1}^0 \phi(s) \,\text{d}s+\phi(0) \right)  - \psi(-1)\left(\rho \int_{-1}^0 \psi(s) \,\text{d}s+\psi(0) \right)\right\vert \\
&\leq& \left(3\rho +\rho^2M+\rho M+M \rho^2+\rho M \right)||\phi-\psi|| \vphantom{\int_{-1}^0}. 
\end{eqnarray}
Hence $L=3\rho +M\left(2\rho^2 +2\rho\right)$ is a valid Lipschitz constant. Existence, uniqueness and continuous dependence then follow from the general theory, c.f. Theorem 3.7 of \cite{smith} and \cite[Chapter 2]{kuang}. \qed

\end{proof}
Due to biological constraints, we are interested only in non-negative solutions, \textit{i.e.} $x_0(s)\geq 0$, $s \in [-1,0]$. More specifically, we consider solutions with $x_0 \in \Omega$, where
\begin{equation}
\Omega:=\left\{\phi \in C: \phi(s)\geq 0 \text{ for } s\in [-1,0] \text{ and }  \phi(0)+\rho \int_{-1}^0 \phi(s) \,\text{d}s \leq1\right\}.
\end{equation}
This set in fact corresponds to the biologically feasible phase space since, for a solution with $x_t \in \Omega$,
\begin{equation}
\theta(t):=x(t)+\rho \int_{t-1}^t x(s) \,\text{d}s
\end{equation} 
represents the total cell density (accounting for all mobile and proliferating cells) that should, in the non-dimensional model, lie between zero and unity. We shall use $\Omega \subset C$, the set of biologically feasible states as our phase space, noting that $\Omega$ depends on the parameter $\rho$.


\subsection{Positivity and boundedness from above}

\begin{lem}
The set $\Omega$ is positively invariant.
\end{lem}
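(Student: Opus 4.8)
The plan is to exploit the auxiliary quantity $\theta(t)=x(t)+\rho\int_{t-1}^t x(s)\,\text{d}s$ that is already singled out in the definition of $\Omega$, since membership $x_t\in\Omega$ is exactly the pair of conditions $x(u)\ge 0$ on $[t-1,t]$ and $\theta(t)\le 1$. The first thing I would do is differentiate $\theta$. Using equation~\eqref{eq} together with the identity $\frac{\text{d}}{\text{d}t}\int_{t-1}^t x(s)\,\text{d}s = x(t)-x(t-1)$, the instantaneous terms $\pm\rho x(t)$ cancel, and recognizing that $\rho\int_{t-1}^t x(s)\,\text{d}s + x(t) = \theta(t)$ one is left with the clean relation
\[
\theta'(t)=\rho\,x(t-1)\bigl(1-\theta(t)\bigr).
\]
This identity is the heart of the argument: along the boundary $\theta=1$ the derivative $\theta'$ vanishes, so $\{\theta\le 1\}$ ought to be forward invariant, and viewing the relation as a linear scalar ODE for $\theta$ with coefficient $\rho\,x(t-1)$ makes this precise.

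The difficulty is that the two defining constraints of $\Omega$ are coupled: to keep $\theta\le 1$ via the displayed relation I need $x(t-1)\ge 0$, whereas to keep $x\ge 0$ I will want $2-\theta(t)\ge 0$. I would break this circularity with the method of steps. On $[0,1]$ the delayed argument satisfies $x(t-1)=\phi(t-1)\ge 0$, which is known from $\phi\in\Omega$. Feeding this into the linear ODE and solving by the integrating factor gives $1-\theta(t)=\bigl(1-\theta(0)\bigr)\exp\!\bigl(-\rho\int_0^t \phi(u-1)\,\text{d}u\bigr)$; since $\theta(0)\le 1$ and the exponential is positive, $\theta(t)\le 1$ throughout $[0,1]$.

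With $\theta\le 1$ in hand on $[0,1]$, positivity follows: rewriting the equation as $x'(t)=-\rho x(t)+\rho x(t-1)\bigl(2-\theta(t)\bigr)$, the forcing term is nonnegative because $\phi(t-1)\ge 0$ and $2-\theta(t)\ge 1>0$, so $\bigl(e^{\rho t}x(t)\bigr)'\ge 0$ and hence $x(t)\ge \phi(0)e^{-\rho t}\ge 0$ on $[0,1]$. This shows $x_1\in\Omega$, and I would then repeat the same two sub-steps verbatim on each successive interval $[n,n+1]$: the positivity just established on $[n-1,n]$ supplies the sign of the delayed term needed to propagate first $\theta\le 1$ and then $x\ge 0$ onto $[n,n+1]$. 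An induction over $n$, truncated at the endpoint $A$ of the maximal existence interval, yields $x_t\in\Omega$ for all $t\in[0,A)$.

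The main obstacle is precisely the coupling of the nonnegativity and the $\theta\le 1$ constraints; everything hinges on (i) discovering the simplification of $\theta'$ and (ii) ordering the estimates correctly within each step, so that the previously secured positivity of the delayed term drives both bounds forward. Once these are arranged, the individual estimates are routine integrating-factor computations.
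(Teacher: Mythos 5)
Your proposal is correct and follows essentially the same route as the paper: both arguments hinge on the identity $\theta'(t)=\rho\,x(t-1)\bigl(1-\theta(t)\bigr)$ and its integrating-factor solution $1-\theta(t)=\bigl(1-\theta(0)\bigr)\exp\bigl(-\rho\int_0^t x(s-1)\,\text{d}s\bigr)$ to show that $\theta\le 1$ is preserved. The only difference is that the paper obtains nonnegativity of $x$ by citing the standard quasipositivity criterion for functional differential equations (Theorem 3.4 of \cite{smith}), whereas you derive it by hand via the method of steps; both are valid.
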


\begin{proof}
 We claim that if $\theta(0)\leq 1$ then $\theta(t)\leq 1$ for $t>0$. Note that 
\begin{equation}
\theta'(t)=-\rho x(t)+\rho x(t-1)\left(2-\rho \int_{t-1}^t x(s) \,\text{d}s-x(t)\right)+\rho x(t)-\rho x(t-1),
\end{equation}
which simplifies to
\begin{equation}
\theta'(t)=\rho x(t-1)(1-\theta(t)). \label{eq:theta}
\end{equation}
For $w(t):=1-\theta(t)$ we have $w'(t)=-\theta'(t)=-\rho x(t-1) w(t)$, hence 
\begin{equation}
w(t)=w(0)\exp\left(-\rho \int_0^t x(s-1)\,\text{d}s\right)\geq 0,
\label{w}
\end{equation} 
whenever $w(0)\geq 0$, or equivalently $\theta(0)\leq 1$. We find that 
\begin{equation}
\theta(t)=x(t)+\rho \int_{t-1}^t x(s) \,\text{d}s\leq 1,
\end{equation} 
holds if 
\begin{equation}
x(0)+\rho \int_{-1}^0 x(s) \,\text{d}s\leq 1.
\end{equation}
The general positivity condition, \textit{i.e.} that $x_t\geq 0$ whenever $x_0\geq 0$, for system \eqref{eq},\eqref{iv} is that $x_t \geq 0$ and $x(t)=0$ implies $f(x_t) \geq 0$ \cite[Theorem 3.4]{smith}. This clearly holds as for $x(t)=0$ we have $f(x_t)=\rho x(t-1)(2-\theta(t))\geq 0$ following from $x_t\geq 0$ and $\theta(t) \leq 1$. Since $x_t \in \Omega$ is equivalent to $x_t \geq 0$ and $\theta(t)\leq 1$, we obtain that $\Omega$ is positively invariant, and additionally the estimate $x(t)<1$ holds. 
\qed
\end{proof}

Consequently, solutions starting from $\phi \in \Omega$ exist globally, \textit{i.e.} on the interval $[-1,\infty)$, with $x^\phi_t \in \Omega$ for all $t\geq 0$.


\subsection{Steady states and their stability}

\begin{thm}
Equation \eqref{eq} has two steady states, $x_0=0$ which is always unstable, and $x_*=1/(\rho+1)$ which is always locally asymptotically stable.
\end{thm}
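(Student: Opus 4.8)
The plan is to invoke the principle of linearized stability for delay equations (e.g.\ \cite{smith}), which reduces the statement to locating the roots of the characteristic equation at each steady state: a root in the open right half-plane gives instability, while all roots in the open left half-plane give local asymptotic stability. First I would find the equilibria by substituting a constant $x(t)\equiv c$ into \eqref{eq}; since $\int_{t-1}^t c\,\mathrm{d}s=c$, this yields $0=\rho c\,(1-(\rho+1)c)$, so $c=0$ or $c=1/(\rho+1)$, which are $x_0$ and $x_*$.

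Next I would linearize $f$ from \eqref{f} at a constant state $\phi\equiv c$: writing $\phi=c+\psi$ and discarding quadratic terms, the variational equation is
\[
y'(t)=-\rho(1+c)\,y(t)+\rho\bigl(2-(\rho+1)c\bigr)\,y(t-1)-\rho^2 c\int_{t-1}^t y(s)\,\mathrm{d}s .
\]
Substituting $y(t)=e^{\lambda t}$ and using $\int_{t-1}^t e^{\lambda s}\,\mathrm{d}s=e^{\lambda t}(1-e^{-\lambda})/\lambda$ gives the characteristic function. For the trivial state $c=0$ the distributed term disappears and the characteristic equation collapses to $\lambda+\rho-2\rho e^{-\lambda}=0$; instability is immediate, since the left-hand side equals $-\rho<0$ at $\lambda=0$, is strictly increasing and tends to $+\infty$ along the real axis, hence has a positive real root.

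The positive equilibrium is the heart of the matter. Setting $c=1/(\rho+1)$ and clearing the removable factor $1/\lambda$, the characteristic quasi-polynomial is
\[
P(\lambda)=\lambda^2+(\rho+k)\lambda-\rho\lambda e^{-\lambda}+\rho k\,(1-e^{-\lambda}),\qquad k:=\frac{\rho}{\rho+1}\in(0,1),
\]
and the decisive step is to recognise the factorization
\[
P(\lambda)=(\lambda+k)\bigl[\lambda+\rho(1-e^{-\lambda})\bigr].
\]
The first factor contributes only the negative root $\lambda=-k$. In the second factor $\lambda=0$ is the spurious root introduced by multiplying through by $\lambda$ (one checks $P(\lambda)/\lambda$ is nonzero at the origin, being equal to $\rho$ there), so it is discarded, and it remains to show every nonzero root of $\lambda+\rho-\rho e^{-\lambda}=0$ has negative real part. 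I would argue by modulus: if $\operatorname{Re}\lambda\ge0$ then $|\lambda+\rho|\ge\operatorname{Re}\lambda+\rho\ge\rho$, whereas $|\rho e^{-\lambda}|=\rho e^{-\operatorname{Re}\lambda}\le\rho$; equality throughout forces $\operatorname{Im}\lambda=0$ and $\operatorname{Re}\lambda=0$, i.e.\ $\lambda=0$, which is excluded. Thus all genuine roots lie in the open left half-plane and $x_*$ is locally asymptotically stable.

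I expect the factorization of $P$ to be the main obstacle: without it one confronts a transcendental equation mixing a discrete and a distributed delay, and the natural alternative of hunting for purely imaginary roots $\lambda=i\omega$ leads to a coupled pair of transcendental equations in $\omega$ and $\rho$ that is unpleasant to rule out directly. Once the factorization is found the remaining estimates are elementary and, notably, uniform in $\rho>0$, so stability of $x_*$ holds for every parameter value and no Hopf bifurcation can arise.
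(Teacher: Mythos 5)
Your proposal is correct and follows essentially the same route as the paper: the same factorization of the quasi-polynomial at $x_*$ into $\left(\lambda+\tfrac{\rho}{\rho+1}\right)\left(\lambda+\rho-\rho e^{-\lambda}\right)$, the same modulus argument excluding nonzero roots with $\Re\lambda\ge 0$, and the same observation that $\lambda=0$ is a spurious root introduced by clearing the $1/\lambda$ from the distributed-delay term. The only differences are cosmetic (linearizing at a general constant state $c$, and spelling out the intermediate-value argument for the positive real root at the origin).
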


\begin{proof} 
Setting $x'=0$, we obtain the steady state equation
\begin{equation}
0=-\rho x+\rho x \left(2-\rho x -x \right),
\end{equation}
which has the solutions $x_0=0$, and $x_*=1/(\rho+1).$

Linearisation of equation~\eqref{eq} around $x=0$ gives
\begin{equation}
x'(t)=-\rho x(t)+2\rho x(t-1),
\end{equation}
with the characteristic equation
\begin{equation}
\lambda+\rho =2\rho  e^{-\lambda},
\end{equation}
which always has a positive real root, hence $0$ is unstable.

Next we look at the stability of the equilibrium $x_*$. 
Using the definition in equation~\eqref{f}, we have
\begin{eqnarray}
\nonumber
f(x_*+\phi)&=&-\rho (\phi(0)+x_*)+\rho (\phi(-1)+x_*)\left(2-\rho \int_{-1}^0 (\phi(s)+x_*) \,\text{d}s-(\phi(0)+x_*)\right)\\&=&L(\phi)+g(\phi),
\end{eqnarray}
which has linear part
\begin{equation}
L(\phi)=-\rho \left( 1+\frac{1}{\rho+1}\right)\phi(0)+\rho \phi(-1)-  \frac{\rho^2}{\rho+1} \int_{-1}^0 \phi(s) \,\text{d}s,
\end{equation}
and 
\begin{equation}
g(\phi)=\rho \phi(-1)\left(-\rho \int_{-1}^0 \phi(s) \,\text{d}s-\phi(0)\right).
\end{equation}
Clearly $\lim_{||\phi|| \to 0} {|g(\phi)|}/{||\phi||}=0$, hence the linear variational equation is
\begin{equation}
y'(t)=-\rho \left( 1+\frac{1}{\rho+1}\right)y(t)+\rho y(t-1)-  \frac{\rho^2}{\rho+1} \int_{t-1}^t y(s) \,\text{d}s.
\end{equation}
The exponential ansatz $y(t)=e^{\lambda t}$ gives the characteristic equation
\begin{equation}
\lambda=- \rho \left( 1+\frac{1}{\rho+1}\right)- \frac{\rho^2}{\rho+1}  \int_{t-1}^t e^{\lambda (s-t)} \,\text{d}s +\rho e^{-\lambda}, \label{charint}
\end{equation}
which becomes, after integration and rearranging, 
 \begin{equation}
\lambda^2=-\rho \left( 1+\frac{1}{\rho+1}\right)\lambda - \frac{\rho^2}{\rho+1}  (1-e^{-\lambda }) +\rho \lambda e^{-\lambda },
 \end{equation}
provided $\lambda \neq 0$. Note that $\lambda=0$ is not a root of equation~\eqref{charint}, otherwise
\begin{equation}
0=-\rho \left( 1+\frac{1}{\rho+1}\right) - \frac{\rho^2}{\rho+1}  +\rho =-\rho <0,
\end{equation}
which is a contradiction. Therefore we may write the characteristic equation as
\begin{equation}
\chi(\lambda)=P(\lambda)+Q(\lambda)e^{-\lambda }=0,
\label{charpol}
\end{equation}
with 
\begin{eqnarray}
P(\lambda)&=&\lambda^2+\rho \left( 1+\frac{1}{\rho+1}\right)\lambda+\frac{\rho^2}{\rho+1},\\ 
Q(\lambda)&=&-\frac{\rho^2}{\rho+1} -\rho\lambda.
\end{eqnarray} 
We can factor the characteristic equation as
\begin{equation}\chi(\lambda)=\left(\lambda+\frac{\rho}{\rho+1}\right)\left(\lambda+\rho-\rho e^{-\lambda}\right)=0, \label{chareq}
\end{equation}
which demonstrates that we always have the real root $\lambda=-{\rho}/({\rho+1})<0$. The other roots are the roots of $\lambda+\rho-\rho e^{-\lambda}=0$. This is a well-known type of characteristic equation of the form $\lambda=A+Be^{-\tau \lambda}$, see for example Chapter 4.5 of \cite{smith}. However, we are on the stability boundary of this equation given that $\lambda=0$ is a root (which is, as we established, not a root of the original characteristic equation \eqref{charint}). We can quickly show that all other roots have imaginary parts less than zero: assuming a root with $\Re \lambda \geq 0$ but $\lambda \neq 0$, we have
\begin{equation}
\rho < \vert \lambda+\rho\vert = \vert \rho e^{-\lambda}\vert \leq \rho,
\end{equation}
which is a contradiction. Therefore the equilibrium $x_*$ is locally asymptotically stable. \qed
\end{proof}


\section{Global dynamics}


\subsection{Persistence} 

\begin{thm} \label{persistence}
Equation \eqref{eq} is strongly uniformly persistent, that is there exists a $\delta>0$, independent of the initial data, such that for any solution $x^\phi(t)$ with $\phi(0)>0$, 
\begin{equation}
\liminf_{t \to \infty} x(t) \geq \delta.
\end{equation}
\end{thm}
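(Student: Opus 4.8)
The plan is to run the argument through persistence theory, using the instability of the zero equilibrium as the driving mechanism. I work with the semiflow $\Phi$ generated by \eqref{eq} on the bounded, positively invariant set $\Omega$, and take the natural persistence functional $p(\phi)=\phi(0)$. As a preliminary I would record strict positivity: rewriting \eqref{eq} as $\bigl(e^{\rho t}x(t)\bigr)'=\rho e^{\rho t}x(t-1)\bigl(2-\theta(t)\bigr)$ and using $x(t-1)\ge 0$ together with $2-\theta(t)\ge 1$ (both already available), the map $t\mapsto e^{\rho t}x(t)$ is nondecreasing. Hence $x(s)>0$ at some $s$ forces $x(t)\ge e^{-\rho(t-s)}x(s)>0$ for all $t\ge s$; in particular $\phi(0)>0$ yields $x(t)>0$ for every $t\ge 0$, so that the delayed feedback term is genuinely positive for $t>1$.

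The key step is uniform weak persistence, i.e. the existence of $\eta>0$, independent of $\phi$, with $\limsup_{t\to\infty}x(t)\ge \eta$ whenever $\phi(0)>0$. I would fix $\eta<x_*=1/(\rho+1)$ and argue by contradiction: if $\limsup_{t\to\infty}x(t)<\eta$, then $x(t)<\eta$ for all large $t$, whence $\theta(t)=x(t)+\rho\int_{t-1}^t x(s)\,\mathrm{d}s<(1+\rho)\eta<1$ and therefore $2-\theta(t)>c$, where $c:=2-(1+\rho)\eta>1$. Substituting into \eqref{eq} gives the differential inequality $x'(t)\ge -\rho x(t)+c\rho\, x(t-1)$ for $t$ large. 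The comparison linear equation $y'(t)=-\rho y(t)+c\rho\,y(t-1)$ has characteristic function $\lambda+\rho-c\rho e^{-\lambda}$, which equals $\rho(1-c)<0$ at $\lambda=0$ and tends to $+\infty$ as $\lambda\to+\infty$, so it possesses a real dominant root $\lambda_0>0$; since the delayed coefficient is positive, its solutions with nonnegative, nontrivial initial data grow like $e^{\lambda_0 t}$. Choosing the initial segment of $y$ to lie below the (strictly positive) segment of $x$ and invoking the quasimonotone comparison principle then forces $x(t)\ge y(t)\to\infty$, contradicting the a priori bound $x(t)<\eta$. This shows that the zero equilibrium is a uniform weak repeller.

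Finally I would upgrade uniform weak persistence to the strong uniform persistence in the statement. The semiflow $\Phi$ is point dissipative because $\Omega$ is bounded and positively invariant, and it is asymptotically smooth (indeed $\Phi_t$ is compact for $t\ge 1$), as is standard for delay differential equations with bounded orbits; consequently it admits a compact global attractor. The only invariant set contained in the extinction set $\{\phi\in\Omega:\ x^\phi(t)\equiv 0\}$ is, by the positivity above, the single equilibrium $\{0\}$, which is isolated and trivially acyclic. The standard acyclicity results of persistence theory (cf. \cite{smith}) then convert the weak-repeller property into the existence of a uniform bound $\delta>0$, independent of the initial data, with $\liminf_{t\to\infty}x(t)\ge\delta$.

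I expect the main obstacle to be the weak-persistence step: one must choose the threshold $\eta$ correctly, strictly below $x_*$, so that the perturbed delayed coefficient $c$ exceeds the critical value $1$ at which the dominant root crosses zero, and then make the comparison rigorous—verifying that $\lambda_0$ is real and positive and that the monotone comparison genuinely yields unbounded growth against the bound $x(t)<1$. By contrast, checking the abstract hypotheses (asymptotic smoothness, dissipativity, and acyclicity of the trivial boundary dynamics) needed for the weak-to-strong upgrade should be routine.
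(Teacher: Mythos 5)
Your proposal is correct, but it takes a genuinely different route from the paper. The paper proves strong uniform persistence directly and in a self-contained way: assuming a sequence of solutions with $\liminf_{t\to\infty}x^{\phi_n}(t)\to 0$, it applies the variation of constants formula $x(\sigma)=e^{-\rho(\sigma-\omega)}\bigl(x(\omega)+\int_\omega^\sigma e^{\rho(s-\omega)}g(x_s)\,\mathrm{d}s\bigr)$ together with the integral mean value theorem at carefully chosen times $T_n<\eta_n<t_n$, shows that the nonlinear factor $2-\theta(\eta_n)$ tends to $2$, and extracts the numerical contradiction $1\ge 2q^4$ for a fixed $q$ with $q^4>1/2$; no abstract persistence theory and no comparison principle for delay equations are invoked. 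You instead follow the classical two-step scheme: uniform \emph{weak} persistence via a quasimonotone comparison with the linear equation $y'=-\rho y+c\rho y(t-1)$, $c=2-(1+\rho)\eta>1$, whose dominant characteristic root $\lambda_0>0$ is real (this correctly exploits the instability of $0$, where the linearization has delayed coefficient $2\rho$), followed by the standard weak-to-strong upgrade using the compact attractor and the fact that the extinction set reduces to the single acyclic, isolated equilibrium $\{0\}$. Both arguments work; yours makes the mechanism (instability of the origin) transparent and is modular, at the cost of importing the quasimonotone comparison theorem and the acyclicity/repeller machinery, whereas the paper's argument is elementary and entirely self-contained. Two small points to tighten in your version: in the comparison step it is cleanest to take $y(t)=\epsilon e^{\lambda_0 t}$ itself (an exact positive solution) with $\epsilon$ small enough that $y_T\le x_T$ on $[T-1,T]$, which avoids having to argue that an arbitrary nonnegative nontrivial solution of the linear equation has a positive coefficient on the leading eigenfunction; and when invoking the weak-to-strong theorem you should note explicitly that your strict-positivity observation rules out total trajectories along which $\phi(0)$ vanishes after being positive, which is the hypothesis those theorems need beyond compactness.
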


\begin{proof}

Let 
\begin{equation}
g(\psi) =\rho \psi(-1)\left(2-\psi(0)-\rho\int_{-1}^0 \psi(\theta)\,\text{d}\theta\right),
\end{equation}
then equation \eqref{eq} can be written as
\begin{equation}
x'(t)=-\rho x(t)+g(x_t). 
\end{equation}
From the variation of constants formula, for any $\sigma>\omega>0$ we find
\begin{equation}
x(\sigma)=e^{-\rho(\sigma-\omega)}\left(x(\omega)+\int_{\omega}^{\sigma} e^{\rho(s-\omega)} g(x_s) \,\text{d}s\right). 
\end{equation}
We use the notation $x_\infty=\liminf_{t \to \infty} x(t)$, and $x^\phi$ to denote a specific solution with initial function $\phi$. Assume the contrary of the statement of the theorem. Then there is a sequence $\phi_n$, such that $\lim_{n \to \infty} x_\infty^{\phi_n} =0$.  Let $q \in (0,1)$ such that $q^4>1/2$ (any $q>0.841$ is just fine). Then there is a $T_n \to \infty$, such that $x^{\phi_n}(t) \in [qx_\infty^{\phi_n},1]$ for all $t>T_n-1$. 
Furthermore, there is a $t_n>T_n+n$ such that $x^{\phi_n}(t_n) \in  [qx_\infty^{\phi_n},q^{-1}x_\infty^{\phi_n}] $.
For the particular case $x = x^{\phi_n}, \sigma = t_n$
and $\omega = T_n$, the variation of constants formula gives
\begin{equation}
x^{\phi_n}(t_n)=e^{-\rho(t_n-T_n)}\left(x^{\phi_n}(T_n)+\int_{T_n}^{t_n} e^{\rho(s-T_n)} g\left(x^{\phi_n}_s\right) \,\text{d}s\right). 
\end{equation}
Using the integral mean value theorem, there is an $\eta_n \in [T_n,t_n]$, such that
\begin{equation}
e^{-\rho t_n}\int_{T_n}^{t_n} e^{\rho s} g\left(x^{\phi_n}_s\right)\,\text{d}s =e^{-\rho t_n}g\left(x^{\phi_n}_{\eta_n}\right)\int_{T_n}^{t_n} e^{\rho s} \,\text{d}s= g\left(x^{\phi_n}_{\eta_n}\right){\rho^{-1}}\left(1-e^{\rho(T_n-t_n)}\right). 
\end{equation}
Then we have the relation
\begin{equation}
	x^{\phi_n}(t_n)=
	e^{-\rho(t_n-T_n)} x^{\phi_n}\left(T_n\right)+g\left(x^{\phi_n}_{\eta_n}\right){\rho^{-1}}\left(1-e^{\rho(T_n-t_n)}\right). \label{perz}
\end{equation}
Since $x^{\phi_n}(t_n)\to 0$,  $e^{-\rho(t_n-T_n)} x^{\phi_n}(T_n)\to 0$ and $\left(1-e^{T_n-t_n}\right) \to 1$ as $n \to \infty$,
necessarily $g\left(x^{\phi_n}_{\eta_n}\right) \to 0$ as well as $n \to \infty$. Since $x^{\phi_n}_{\eta_n} \in \Omega,$ we have
$g\left(x^{\phi_n}_{\eta_n}\right) \geq \rho x^{\phi_n}_{\eta_n}(-1) $, hence $x^{\phi_n}(\eta_n-1) \to 0$ as well as $n \to \infty$. 

Next we claim that 
\begin{equation}
\left(2-x^{\phi_n}(\eta_n) -\rho\int_{-1}^0 x^{\phi_n}(\eta_n+\theta)\,\text{d}\theta\right) \to 2 \text{ as } n \to \infty.
\end{equation}
From the inequality $x'(t)\geq - \rho x(t)$, we find that $x^{\phi_n}(\eta_n+\theta) \leq e^{\rho} x^{\phi_n}(\eta_n-1)$ for $\theta \in [-1,0]$. Hence, 
\begin{equation}
x^{\phi_n}(\eta_n) + \rho\int_{-1}^0 x^{\phi_n}(\eta_n+\theta)\,\text{d}\theta \leq e^{\rho} x^{\phi_n}(\eta_n-1)(1+\rho),
\end{equation}
and the right-hand side is already shown to converge to zero.
For sufficiently large $n$, we have the estimate $g\left(x^{\phi_n}_{\eta_n}\right) \geq 2q \rho x^{\phi_n}_{\eta_n}(-1) \geq 2q \rho q x^{\phi_n}_{\infty}$ and $\left(1-e^{\rho(T_n-t_n)}\right)>q $, therefore from equation~\eqref{perz} we obtain
\begin{equation}
q^{-1} x^{\phi_n}_{\infty} \geq  x^{\phi_n}(t_n)\geq 2q^{3} x^{\phi_n}_{\infty}.
\end{equation}
We find $1\geq 2 q^4$, which contradicts the choice of $q$.
\qed
\end{proof}


\subsection{A crucial convergence property}

\begin{thm} For all positive solutions,  we have
\begin{equation}
\lim_{t \to \infty} \left(x(t)+\rho \int_{t-1}^t x(s) \,\mathrm{d}s \right)=1.
\end{equation}
\end{thm}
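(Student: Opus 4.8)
The plan is to exploit the explicit formula for $w(t)=1-\theta(t)$ already obtained in the proof of positive invariance, combined with the strong uniform persistence established in Theorem~\ref{persistence}. Recall from equation~\eqref{eq:theta} that $\theta(t)=x(t)+\rho\int_{t-1}^t x(s)\,\mathrm{d}s$ satisfies the linear equation $\theta'(t)=\rho x(t-1)(1-\theta(t))$, so that $w(t)=1-\theta(t)$ obeys $w'(t)=-\rho x(t-1)w(t)$ and hence, exactly as in equation~\eqref{w},
\begin{equation}
w(t)=w(0)\exp\left(-\rho\int_0^t x(s-1)\,\mathrm{d}s\right).
\end{equation}
Thus the assertion $\theta(t)\to 1$ is equivalent to $w(t)\to 0$, and this will follow once I show that the exponent $\rho\int_0^t x(s-1)\,\mathrm{d}s$ diverges to $+\infty$ as $t\to\infty$.

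First I would invoke Theorem~\ref{persistence}: since we consider a positive solution (i.e. $\phi(0)>0$), there is a $\delta>0$ with $\liminf_{t\to\infty} x(t)\geq\delta$. Taking $\varepsilon=\delta/2$ in the definition of $\liminf$, there exists a $T>0$ such that $x(t)>\delta/2$ for all $t>T$, and consequently $x(s-1)>\delta/2$ for all $s>T+1$. Using only nonnegativity of the integrand on $[0,T+1]$ together with this lower bound afterwards, I would estimate
\begin{equation}
\int_0^t x(s-1)\,\mathrm{d}s\geq \int_{T+1}^t x(s-1)\,\mathrm{d}s\geq \frac{\delta}{2}\,(t-T-1),
\end{equation}
which tends to $+\infty$ as $t\to\infty$. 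Hence the exponential factor tends to zero, and since $w(0)$ is a fixed constant, $w(t)\to 0$; equivalently $\theta(t)\to 1$, as claimed.

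In this argument essentially all of the difficulty has already been absorbed into the persistence result: once a uniform positive lower bound on $x$ is available, the present statement is a short consequence. The only point requiring a little care is that the conclusion should hold for \emph{arbitrary} positive solutions and not merely for those with $w(0)\geq 0$ (i.e. $\theta(0)\leq 1$); the multiplicative representation above makes the sign of $w(0)$ irrelevant, because the decaying exponential forces $w(t)\to 0$ in either case. I expect no further obstacle, the main subtlety being simply to verify that the divergence of $\int_0^t x(s-1)\,\mathrm{d}s$ follows cleanly from the uniform lower bound $\delta$ rather than from the weaker pointwise positivity of $x$.
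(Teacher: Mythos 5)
Your proof is correct and follows essentially the same route as the paper: both arguments feed the persistence bound $x(t-1)>\delta/2$ for large $t$ into the identity $\theta'(t)=\rho x(t-1)(1-\theta(t))$. The paper phrases the last step as a comparison/squeeze between the solutions of two scalar ODEs, while you integrate the linear equation for $w=1-\theta$ explicitly (reusing formula \eqref{w}), but this is the same estimate in different clothing.
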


\begin{proof}
This follows from the standard comparison principle, since for large $t$ we have $1>x(t-1)>{\delta}/{2}>0$, and by equation~\eqref{eq:theta}, we see that $\theta(t)$ is squeezed between $\underline \theta(t)$ and $\bar \theta(t)$,  which are the solutions of
\begin{equation}
\underline \theta'(t)=\rho  \frac{\delta}{2} (1-\underline \theta(t)),
\end{equation} 
and
\begin{equation}
\bar \theta'(t)=\rho (1-\bar \theta(t)),
\end{equation} 
with $\underline \theta(0)=\bar \theta(0)=\theta(0)>0$, each converging to unity. \qed
\end{proof}


\subsection{Global asymptotic stability} 

\begin{thm} All positive solutions of equation~\eqref{eq} in $\Omega$ converge to the positive equilibrium. \label{attractivity}
\end{thm}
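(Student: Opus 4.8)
The plan is to track the deviation $u(t) := x(t) - x_*$ from the positive equilibrium and to exploit the already-established convergence $\theta(t) \to 1$. Writing the right-hand side of \eqref{eq} in terms of $\theta(t) = x(t) + \rho\int_{t-1}^t x(s)\,\mathrm{d}s$, equation \eqref{eq} takes the compact form $x'(t) = -\rho x(t) + \rho x(t-1)\bigl(2 - \theta(t)\bigr)$, so with $\varepsilon(t) := \theta(t) - 1$ the deviation obeys $u'(t) = -\rho u(t) + \rho u(t-1) - \rho\varepsilon(t)x(t-1)$. The crucial preliminary observation is that $\varepsilon$ is not merely vanishing but \emph{integrable}: by \eqref{eq:theta} we have $\varepsilon'(t) = -\rho x(t-1)\varepsilon(t)$, and the persistence bound of Theorem~\ref{persistence} gives $x(t-1) \geq \delta/2$ for all large $t$, whence $|\varepsilon(t)|$ decays exponentially and $\int^\infty |\varepsilon(s)|\,\mathrm{d}s < \infty$. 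Note also that $u$ and $x(t-1)$ are bounded, since $0 \le x(t) < 1$.

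Next I would introduce the Lyapunov--Krasovskii functional
\[
V(t) = \tfrac{1}{2}u(t)^2 + \tfrac{\rho}{2}\int_{t-1}^t u(s)^2\,\mathrm{d}s,
\]
and differentiate along solutions. Substituting the equation for $u'$ and completing the square, all of the undelayed and delayed quadratic terms combine into a single negative square, leaving
\[
V'(t) = -\tfrac{\rho}{2}\bigl(u(t) - u(t-1)\bigr)^2 - \rho\,\varepsilon(t)\,x(t-1)\,u(t).
\]
The exact cancellation here reflects the fact that the limiting autonomous equation $u'(t) = -\rho u(t) + \rho u(t-1)$ is only \emph{neutrally} stable; this is the heart of the matter and the main obstacle, since no strictly dissipative Lyapunov function is available and the linearisation at $x_*$ carries a zero characteristic root [cf. the factor $\lambda + \rho - \rho e^{-\lambda}$ in \eqref{chareq}]. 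The forcing term is controlled by $|\rho\varepsilon(t)x(t-1)u(t)| \le \rho|\varepsilon(t)|$, which is integrable by the previous paragraph. Integrating $V'$ over $[0,T]$ and using $V \ge 0$ therefore bounds $\int_0^\infty (u(s)-u(s-1))^2\,\mathrm{d}s < \infty$; in particular $V' \in L^1$, so $V(t)$ converges to a limit $V_\infty \ge 0$ as $t \to \infty$.

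Finally I would close the argument with a LaSalle-type invariance principle. Because $0 \le x < 1$ and hence $x'$ is bounded, the orbit is precompact in $C$, so the $\omega$-limit set $\omega(\phi)$ is nonempty, compact and invariant. On $\omega(\phi)$ one has $\theta \equiv 1$ (from $\theta(t)\to 1$) and $\mathcal{V} \equiv V_\infty$ (from $V(t)\to V_\infty$), where $\mathcal{V}$ is the continuous functional on $C$ representing $V$. Along any entire solution lying in $\omega(\phi)$ we thus have $\varepsilon \equiv 0$, so that $V'(t) = -\tfrac{\rho}{2}(u(t)-u(t-1))^2$, and constancy of $\mathcal{V}$ forces $u(t) = u(t-1)$ for all $t$. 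Feeding this back into $u'(t) = -\rho u(t) + \rho u(t-1) = 0$ shows that every element of $\omega(\phi)$ is a constant, and the constraint $\theta \equiv 1$ then selects exactly the value $x_*$, giving $V_\infty = 0$. Since $\tfrac{1}{2}u(t)^2 \le V(t) \to 0$, we conclude $x(t) \to x_*$, which is the assertion. The delicate point throughout is converting the merely neutral stability of the limit equation into genuine convergence, which is precisely why the integrable perturbation $\varepsilon$ and the selection mechanism provided by the constraint $\theta \equiv 1$ are indispensable.
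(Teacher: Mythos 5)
Your argument is correct, and it takes a genuinely different route from the paper. The paper rewrites \eqref{eq} as the linear nonautonomous equation $z'(t)=-\rho z(t)+\rho z(t-1)\bigl(1+w(t)\bigr)$ with $w=1-\theta$, shows via persistence that $w$ decays exponentially (hence is an $L^1\cap L^2$ perturbation), and then invokes the Gy\H{o}ri--Pituk $L^2$-perturbation theorem to conclude that every solution has the asymptotic form $\exp\bigl(\int_0^t\lambda(s)\,\mathrm{d}s\bigr)(\xi+o(1))$ and therefore converges to a constant, which persistence identifies as $x_*$. You exploit exactly the same preliminary facts --- persistence forces $\varepsilon=\theta-1$ to decay exponentially, and the limiting linear equation $\lambda+\rho=\rho e^{-\lambda}$ is only neutrally stable --- but you replace the external perturbation theorem by the Lyapunov--Krasovskii functional $V(t)=\tfrac12 u(t)^2+\tfrac{\rho}{2}\int_{t-1}^t u(s)^2\,\mathrm{d}s$, whose derivative collapses to $-\tfrac{\rho}{2}(u(t)-u(t-1))^2$ plus an integrable remainder, followed by a LaSalle-type invariance argument on the precompact $\omega$-limit set. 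Your computation of $V'$ is right, the deduction $V'\in L^1\Rightarrow V(t)\to V_\infty$ is sound, and the identification step on $\omega(\phi)$ (where $\theta\equiv 1$ forces entire solutions to satisfy $y'=-\rho y+\rho y(\cdot-1)$, constancy of $\mathcal{V}$ forces $y(t)=y(t-1)$, hence $y'\equiv 0$, and $\theta\equiv1$ selects $y\equiv x_*$) is complete; it even handles the exclusion of the zero equilibrium slightly more cleanly than the paper, which appeals to persistence a second time. The trade-off: the paper's route yields a quantitative asymptotic representation of solutions (with rate information encoded in $\lambda(t)$) at the price of citing a nontrivial external theorem, whereas yours is elementary and self-contained --- needing only Arzel\`a--Ascoli, invariance of $\omega$-limit sets, and the explicit completion of the square --- but delivers convergence without a rate.
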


\begin{proof} First we recall the following theorem from \cite{gyoripituk}:
\begin{thmx}[Gy\H ori-Pituk]\label{gyp} Consider 
\begin{equation}
x'(t)=(a_0+a(t))x(t)+(b_0+b(t))x(t-\tau),
\end{equation}
where $a_0,b_0\in R$, $\tau>0$ are constants and $a,b: [0,\infty) \to R$ are continuous functions. Let the following assumptions be satisfied:
\begin{enumerate}[label=(\roman*),leftmargin=1cm]
\item $\lambda=a_0+b_0 e^{-\lambda \tau}$ has a unique root $\lambda_0$ with largest real part, and this root $\lambda_0$ is real and simple;
\item $a(t) \to 0$, $b(t)\to 0$ as $t \to \infty$;
\item $\int_0^\infty a^2(t)\,\mathrm{d}t<\infty$, $\int_0^\infty b^2(t)\,\mathrm{d}t<\infty$;
\item $\int_0^\infty \left|\tau a(t)-\int_{t-\tau}^t a(s)\,\mathrm{d}s\right| \mathrm{d}t<\infty$, $\int_0^\infty \left|\tau b(t)-\int_{t-\tau}^t b(s)\,\mathrm{d}s\right|\mathrm{d}t<\infty$.
\end{enumerate}
Then, every solution $x(t)$ satisfies 
\begin{equation}
x(t)=\exp\left(\int_0^t \lambda(s)\,\mathrm{d}s\right)\left(\xi+o(1)\right), \quad t \to \infty,
\end{equation}
where 
\begin{equation}
\lambda(t)=\lambda_0+\left(1+b_0\tau e^{-\lambda_0 \tau}\right)^{-1}\left(a(t)+e^{-\lambda_0\tau} b(t)\right),
\end{equation} 
and $\xi=\xi(x)$ is a constant depending on the solution $x$.
\end{thmx}

\noindent
Fix some $\psi \in \Omega$, and let $x^\psi(t)$ be the corresponding solution with 
\begin{equation} 
w^\psi(t)=1-x^\psi(t)-\rho \int_{t-\tau}^t x^\psi(s)\,\text{d}s=w(0)\exp\left(-\rho\int_0^t x^\psi(s-1)\,\text{d}s\right),
\label{wpsi} 
\end{equation}
where the last equality was given in equation~\eqref{w}. Now consider the equation 
\begin{equation} 
z'(t)=-\rho z(t)+\rho z(t-1)\left(1+w^\psi(t)\right). 
\label{eqpsi}
\end{equation}
We apply Theorem \ref{gyp}, and check all the conditions, where we have $a_0=-\rho$, $b_0=\rho$, $\tau=1$, $a(t)=0$ and $b(t)=\rho w^\psi(t)$. The conditions with $a(t)$ are trivial, and the conditions with $b(t)$ follow from the combination of persistence and equation~\eqref{wpsi} as follows, since $w$ is exponentially convergent.

\begin{enumerate}[label={\it(\roman*)},leftmargin=1cm]
\item $\lambda=-\rho+\rho e^{-\lambda }$ has a unique root $\lambda_0$ with largest real part, and this root $\lambda_0$ is real and simple (in fact $\lambda_0=0$).
\item To see that $w^\psi(t)\to 0$ as $t \to \infty$, we can use Theorem 4.1: there is a $\delta>0$ and a $T$ such that $x(t)>\delta$ for $t>T-1$. From equation~\eqref{wpsi} we find
\begin{eqnarray} 
\nonumber
w^\psi(t)&=&w(0)\exp\left(-\rho\int_0^T x^\psi(s-1)\,\text{d}s\right)\exp\left(-\rho\int_T^t x^\psi(s-1)\,\text{d}s\right)\\
&<& w(0)\exp\left(-\rho \int_0^T x^\psi(s-1)\,\text{d}s\right)e^{-\rho \delta (t-T)},
\label{wexp} 
\end{eqnarray}
or
\begin{equation} 
w^\psi(t) < Ke^{-\rho\delta t}, \quad K=w(0)\exp\left(-\rho\int_0^T x^\psi(s-1)\,\text{d}s\right)e^{r\delta T}.
\label{wexp2} 
\end{equation}
\item The estimate in equation~\eqref{wexp2} shows that 
\begin{equation}
\int_0^\infty \left(w^\psi(t)\right)^2 \,\text{d}t<K ^2\int_0^\infty e^{-2\rho\delta t} \,\text{d}t = \frac{K^2}{2\rho\delta} < \infty.
\end{equation}
\item From the estimate in equation~\eqref{wexp2}, we also find that
\begin{equation}
\int_0^\infty \left|w^\psi(t)-\int_{t-1}^t w^\psi(s)\,\text{d}s\right|\text{d}t<\int_0^\infty  w^\psi(t)\,\text{d}t+ \int_0^\infty \int_{t-1}^t w^\psi(s)\,\text{d}s\,\text{d}t<\infty.
\end{equation}
\end{enumerate}
As a result, 
\begin{equation}
\lambda(t)=(1+\rho)^{-1}\rho w^\psi(t),
\end{equation}
and all the conditions of Theorem 4.3 hold. Thus, every solution $z(t)$ satisfies 
\begin{equation}
z(t)= \exp\left((1+\rho)^{-1}\rho\int_0^t  w^\psi(s)\,\text{d}s\right)(\xi+o(1)), \quad t \to \infty.
\end{equation}
Now notice that a solution $x(t)$ of equation~\eqref{eq} with initial function $\psi$ is also a solution of equation~\eqref{eqpsi}, hence it converges to a constant. In view of Theorem \ref{persistence}, this constant can only be the positive equilibrium. 

\end{proof}


\subsection{The global attractor} 

\begin{thm} The global attractor $\mathcal{A}$ consists of the two equilibria and a heteroclinic orbit connecting the two.
\end{thm}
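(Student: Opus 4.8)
The plan is to first secure the existence of a compact global attractor, then to enumerate the complete orbits that it must consist of, and finally to construct the connecting orbit and prove its uniqueness.

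\emph{Existence of $\mathcal{A}$.} The estimate $x(t)<1$ for $t>0$ established in the proof that $\Omega$ is positively invariant shows that $B:=\{\phi\in\Omega:\|\phi\|_\infty\le 1\}$ is a bounded, closed, positively invariant absorbing set (every solution lies in $B$ for $t\ge 1$). On $B$ the vector field is uniformly bounded, $|x'|\le 3\rho$, so for $t\ge 1$ the solution map $\Phi(t)$ sends bounded sets to equicontinuous, hence relatively compact, families by Arzel\`a--Ascoli; thus the semiflow is eventually compact and point dissipative. The standard theory of dissipative retarded functional differential equations then yields a compact global attractor $\mathcal{A}\subset B$, and $\mathcal{A}$ coincides with the union of all complete orbits that remain in $\Omega$ for all $t\in\R$. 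It therefore suffices to classify these complete orbits.

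\emph{Classification of complete orbits.} For a complete orbit $x:\R\to[0,1)$ put $w(t)=1-x(t)-\rho\int_{t-1}^t x(s)\,\mathrm{d}s$, so that by \eqref{eq:theta} we have $w'=-\rho x(t-1)w$ and hence $w(t)=w(0)\exp\!\big(-\rho\int_0^t x(s-1)\,\mathrm{d}s\big)$, with $0\le w\le 1$ throughout. Either $w\equiv 0$ or $w$ never vanishes. If $w\equiv 0$ then $\theta\equiv 1$ and equation \eqref{eq} reduces to the linear equation $x'=-\rho x+\rho x(t-1)$, whose characteristic equation $\lambda=-\rho+\rho e^{-\lambda}$ has $\lambda=0$ as its rightmost, simple root and all others in the open left half-plane; a complete orbit bounded on $\R$ for such a linear autonomous equation must lie in the center subspace, i.e.\ be constant, forcing $x\equiv x_*$. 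If instead $w>0$ everywhere, then $w\le 1$ forces $\int_{-\infty}^0 x(s-1)\,\mathrm{d}s<\infty$; since $x\ge 0$ is bounded and uniformly continuous (as $|x'|\le 3\rho$), this gives $x(t)\to 0$ as $t\to-\infty$, so $\alpha(x)=\{0\}$. Combining this with Theorem \ref{attractivity}, which drives every positive solution to $x_*$, we conclude that each complete orbit in $\mathcal{A}$ is either the equilibrium $0$, the equilibrium $x_*$, or a heteroclinic orbit running from $0$ to $x_*$.

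\emph{Construction and uniqueness of the connection.} The characteristic function at $0$, namely $\lambda+\rho-2\rho e^{-\lambda}$, is strictly increasing in real $\lambda$, so it has a unique real root $\lambda_0>0$, which is simple and rightmost, with strictly positive eigenfunction $e^{\lambda_0 s}$. I would take the one-dimensional strong unstable manifold $W^{uu}(0)$ tangent to $\operatorname{span}\{e^{\lambda_0 s}\}$; its positive branch lies in $\Omega$ near $0$, stays in $\Omega$ by positive invariance, and, being a positive solution, converges to $x_*$ by Theorem \ref{attractivity}, furnishing a heteroclinic orbit $\Gamma$. For uniqueness I would show that every heteroclinic orbit must approach $0$ along precisely this direction. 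This is the main obstacle: the unstable manifold of $0$ need not be one-dimensional, since for large $\rho$ complex roots can cross into the right half-plane, so I cannot simply invoke a one-dimensional unstable manifold. The resolution is positivity. As $t\to-\infty$ the backward asymptotics are governed by the unstable eigenvalue of smallest real part carrying a nonzero coefficient, because that mode decays slowest backward and dominates the sign for large negative $t$. Since $\lambda_0$ is the unique real unstable root and every other unstable root is complex with strictly smaller real part, a surviving complex component would make $x(t)$ oscillate in sign, contradicting $x\ge 0$; hence all complex components vanish and the orbit approaches $0$ tangent to $e^{\lambda_0 s}$ with positive coefficient. Thus every heteroclinic orbit lies on the positive branch of $W^{uu}(0)$, which is a single orbit up to time translation. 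Consequently $\mathcal{A}=\{0\}\cup\{x_*\}\cup\Gamma$ with $\Gamma$ the unique heteroclinic connection, which is the assertion.
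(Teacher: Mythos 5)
Your proposal is correct and, for the existence of the attractor and for the construction and uniqueness of the connecting orbit, follows essentially the same route as the paper: compactness via Arzel\`a--Ascoli and Hale's theorem, the one-dimensional fast unstable manifold along the leading real root $\lambda_0$ of $\lambda+\rho=2\rho e^{-\lambda}$ with its positive branch entering $\Omega$, forward convergence by Theorem \ref{attractivity}, and exclusion of other heteroclinics because any surviving complex unstable mode dominates backward in time and forces sign changes incompatible with $x\geq 0$. Where you genuinely diverge is in showing that the attractor contains nothing else. The paper argues softly: for a putative extra complete orbit $\Gamma$, the $\omega$-limit set is $\{x_*\}$ by Theorem \ref{attractivity}, and if the $\alpha$-limit set contains a nonzero point $\phi$ then $x_*\in\alpha(\psi)$ by invariance, contradicting the local asymptotic stability of $x_*$. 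You instead exploit the explicit formula $w(t)=w(0)\exp\bigl(-\rho\int_0^t x(s-1)\,\mathrm{d}s\bigr)$ along a complete orbit to get a trichotomy: $w\equiv 0$ reduces the equation to the linear $x'=-\rho x+\rho x(t-1)$ whose bounded complete orbits are constants, forcing $x\equiv x_*$; while $w>0$ together with $w\leq 1$ forces $\int_{-\infty}^0 x(s-1)\,\mathrm{d}s<\infty$ and hence, by uniform continuity, $x(t)\to 0$ as $t\to-\infty$. This is more quantitative and self-contained (it pins down $\alpha(\psi)=\{0\}$ directly, without invoking the stability of $x_*$ or properties of $\alpha$-limit sets), at the cost of needing the spectral decomposition for the linear reduction; the paper's version is shorter but leans on standard limit-set machinery. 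Two small points you should not leave implicit if you write this up: the backward asymptotic expansion that underlies your oscillation argument requires the local invariant manifold theory the paper cites (the decomposition $C=P_0\oplus P_k\oplus Q$ and the graphs $\mathcal{W}_0$, $\mathcal{W}_u$), and the claim that the positive branch of the strong unstable manifold lies in $\Omega$ near $0$ needs the explicit check that both $\phi_c>0$ and $\phi_c(0)+\rho\int_{-1}^0\phi_c<1$ for small $c$, which the paper carries out.
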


\begin{proof} First we demonstrate that the global attractor $\mathcal{A}$ of $\Phi^\Omega$ exists. From the invariance of the bounded set $\Omega$, this semiflow generated by the equation on $\Omega$ is point dissipative. It follows from the Arzel\`a-Ascoli theorem that the solution operators $\Phi^\Omega_t$ are completely continuous for $t \geq 1$, then by applying Theorem 3.4.8 of \cite{hale88}, the compact global attractor exists. 

The two equilibria are part of the global attractor. Next we show that there exists a unique heteroclinic orbit in $\Omega$ connecting $x_0=0$ and the positive equilibrium, $x_*$. Recall that the characteristic equation of the linearization at $x_0=0$ is $\lambda+\rho=2\rho e^{-\lambda} ,$ which has  a single positive root $\lambda_0$. The other roots
form a sequence of complex conjugate pairs $(\lambda_j,\bar
\lambda_j)$ with $\Re \lambda_{j+1} < \Re \lambda_j < \lambda_0$ for
all integers $j \geq 1$. We refer to Chapter XI of \cite{dvvw} for a complete analysis
of the characteristic equation of the form $z-\alpha-\beta e^{-z}$, in particular Fig. XI.1. in \cite{dvvw} which summarizes its properties. Our equation is a special case with $\alpha=-\rho$, $\beta=2\rho$.

There is a $\gamma>0$ such that $\lambda_0
> \gamma > \Re \lambda_j$ for all $j \geq 1$, and there is a $k \in \N$ such
that $\Re \lambda_{k} > 0$ and $\Re \lambda_{k+1} \leq 0$, where the point $(-\rho,2\rho)$ lies between the curves $C_k^-$ and $C_{k+1}^-$ on the $(\alpha,\beta)$-plane, given by 
\begin{equation}
C_j^-=\left\{(\alpha,\beta)=\left( \frac{\nu \cos\nu}{\sin \nu}, -\frac{\nu}{\sin\nu}\right) : \, \nu \in \left((2j-1)\pi, 2j \pi \right)\right\}.
\end{equation}
The intersections of $C_j^-$ and the half-line $(-\rho,2\rho)$ ($\rho>0$) are determined by $\cos \nu = 1/2$, thus $\nu=2j\pi-\pi/3$,
$\sin \nu = -\sqrt{3}/2$ and the intersection is given by $\rho=\rho_j=(2j\pi-\pi/3)/\sqrt{3}.$ Hence, $k$ is either zero  or the largest positive integer such that $\rho>\rho_k$.

Considering the leading real root, the corresponding eigenfunction is
given by  $\chi_0(s):=e^{\lambda_0 s}$, $s \in [-1,0].$ The phase
space $C$ can be decomposed as $C=P_0 \oplus P_k \oplus Q$, where the
function $\chi_0 \in C$ spans the linear eigenspace $P_0:=\{c \chi_0
\, : \, c \in \R \}$. $P_k$ is a $2k$-dimensional eigenspace corresponding to
$\{\lambda_j:j=1,\ldots,k\}$. If $\lambda_0$ is the only eigenvalue with positive real
part then $P_k=\emptyset$. There is an $\varepsilon>0$ such that $\Re \lambda_{k} > \varepsilon$. $Q$ corresponds to the remaining part of the spectrum.

There exist open neighborhoods $N_0$, $N_k$, $M$ in
$P_0$, $P_k$, $Q$, respectively, and  $C^1$-maps $w_0: N_0 \to P_k\oplus Q$,  
$w_u: N_0\oplus N_k \to Q$ with range
in $N_k\oplus M$ and $M$, respectively, such that  $w_0(0)=0$, $Dw_0(0)=0$,
$w_u(0)=0$, $Dw_u(0)=0$; where $D$ denotes the Fr\'echet-derivative. Then, the $\gamma$-unstable set
of the equilibrium $x_0=0$, namely
\begin{eqnarray}
\nonumber 
\mathcal{W}_0(0)&:=&\left\{ \vphantom{e^{\gamma t}} \phi \in N_0+N_k+M : \hbox{there is a trajectory } z_t, \, t \in \R \hbox{ with } z_0= \phi, \right. \\ && \left. \quad\quad z_t \in N_0+N_1 \hbox{ when } t \leq 0 \hbox{
	and } z(t)e^{-\gamma t} \to 0 \hbox { as } t \to -\infty\right\},
\end{eqnarray}
coincides with the graph
\begin{equation}
\mathcal{W}_0:=\{ \phi +w_0(\phi) : \phi \in N_0 \}.
\end{equation}
Furthermore, the unstable set
of the equilibrium $x_0=0$ 
\begin{eqnarray}
\nonumber
\mathcal{W}_u(0)&:=&\left\{ \vphantom{e^{-\varepsilon t}} \phi \in N_0+N_k+M : \hbox{there is a trajectory } z_t, t \in \R \hbox{
	with } z_0= \phi, \right.\\ && \left. \quad\quad z_t \in N_0+N_1 \hbox{when } t \leq 0 \hbox{
	and } z(t)e^{-\varepsilon t} \to 0 \hbox { as } t \to -\infty\right\},
\end{eqnarray} 
coincides with the graph
\begin{equation}
\mathcal{W}_u:=\{ \phi +w_u(\phi) : \phi \in N_0+N_k \}.
\end{equation}
For the details see \cite{fhwmanifold,kww}.

For any $\phi \in N_0$, there is a $c \in \R$ such that
$\phi=c\chi_0$. We have $||\chi_0||=1$ and $ \chi_0(t) \geq
e^{-\lambda_0} > 0$ for all $ t \in [-1,0]$. It follows from
$Dw(0)=0$ that 
\begin{equation}
\lim_{||\phi|| \to 0} \frac{||w(\phi)||}{||\phi||}
= 0, \, \, \phi \in N_0,
\end{equation} 
which means
\begin{equation} 
\lim_{c
	\to 0} \frac{||w(c \chi_0)||}{|c|} = 0.
	\end{equation}  
Therefore, there exists a
$c_0>0$ such that $||w(c\chi_0)||/|c|< e^{-\lambda}/2$ whenever $c \in (0,c_0)$, or equivalently $||w(c \chi_0)
|| < ce^{-\lambda}/2$. We may assume that $c_0<1/(2(1+\rho))$. Then
\begin{equation}
\min \left\{ c\chi_0(t)+w(c\chi_0)(t) : t \in [-1,0], \,\, c \in (0,c_0) \right\} \geq
ce^{-\lambda}- \frac{c}{2} e^{-\lambda} = \frac{c}{2}
e^{-\lambda } > 0,
\end{equation} 
and 
\begin{eqnarray}
\nonumber
c\chi_0(0)+w(c\chi_0)(0)+ r \int_{-1}^0 \left\{c\chi_0(s)+w(c\chi_0)(s)\right\}\,\text{d}s &\leq& (1+\rho)(c+||w(c\chi_0)||)\\
\nonumber
&<&2c_0(1+\rho)\\
&<&1,
\end{eqnarray}
thus
\begin{equation}
\phi_c:= c\chi_0+w(c\chi_0) \in
\mathcal W_0 \cap \Omega \hbox{ for all } c \in (0,c_0).
\end{equation}

The unstable
set $\mathcal{W}_0(0)$ intersects $\Omega$, and for any
function $\phi_c$ of this intersection, there is a complete solution
$x(t):\R \to \R^+_0$ such that $x_0=\phi_c$ and $x_t \to 0_*$ as $t
\to -\infty$. By the negative invariance of $\mathcal W_0(0)$, the
trajectory is completely in $\Omega$, hence we have proved the existence of a heteroclinic orbit connecting the two equilibria.

For uniqueness, consider the unstable manifold $\mathcal{W}_u(0)$.
Any solution on the set $\mathcal{W}_u(0)\backslash \mathcal{W}_0(0)$ oscillates about $0$ as
$t \to - \infty$, hence can not be a positive heteroclinic solution. Denote the unique heteroclinic orbit in $\Omega$ by $\mathcal{H}$. We claim that $\mathcal{H}$ and the two equilibria constitute the global attractor, otherwise there are further complete orbits in $\Omega$. Assume there is a $\psi \in \Omega$ which is on such a complete orbit $\Gamma \subset \Omega$, so  $\psi \neq 0$, $\psi \neq x_*$, and $\psi \notin \mathcal{H}$. 
Then its $\alpha$-limit set $\alpha(\psi)$ and $\omega$-limit set $\omega(\psi)$ are non-empty, compact and invariant. In particular, by Theorem \ref{attractivity}, $\omega(\psi)=\{x_*\}$. If $\alpha{(\psi)}=\{x_0\}$, then $\Gamma$ is a connecting orbit coinciding with $\mathcal{H}$. Hence, we may assume that there is a $\phi \neq 0$ such that $\phi \in \alpha(\psi)$. Then $x_t^\phi \to x_*$ as $t \to \infty$, so $x_* \in \alpha(\psi)$ as well. But this contradicts the stability of $x_*$. Thus, the global attractor does not contain any other orbit, and $\mathcal{A}=\{x_0\} \cup\{x_*\}\cup \mathcal{H}$.
 
\end{proof}


\section{Numerical simulations and metastability}


\begin{figure}
	\begin{center}
		\includegraphics[scale=0.31]{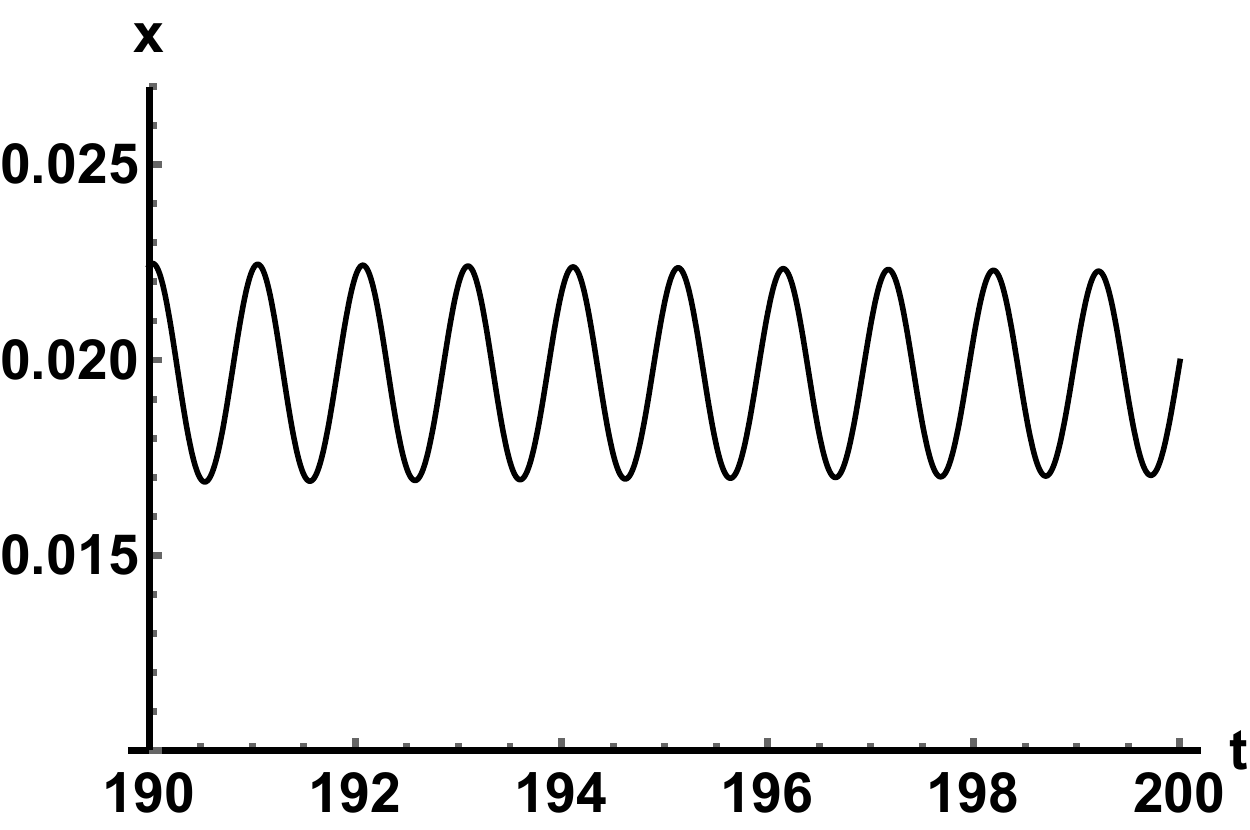}
		\includegraphics[scale=0.31]{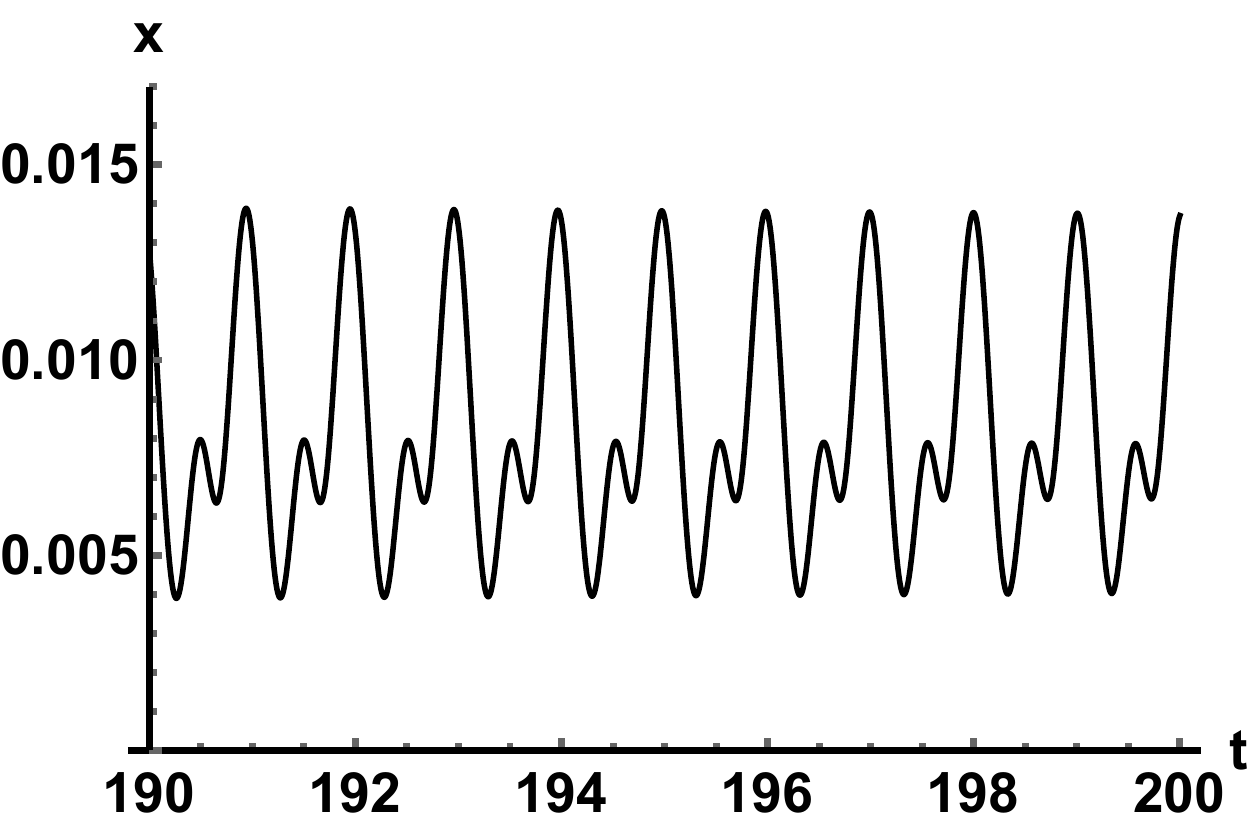} 
		\includegraphics[scale=0.31]{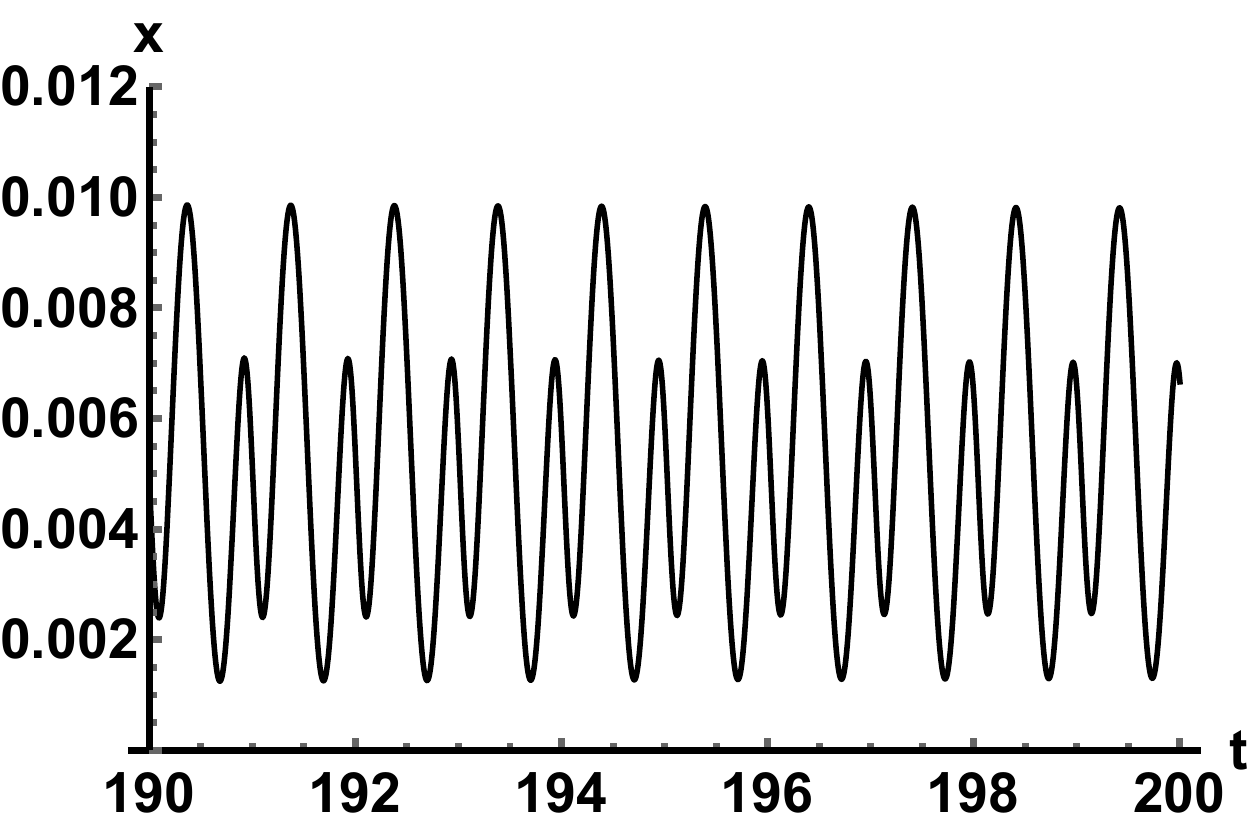}
	\end{center}
	\caption{Representative solutions that appear periodic over short time scales. From left to right, $\rho=50,120,190$ with $\phi(t)=0.005(\cos(10t)+1)$.}
	\label{figure:solutions}
\end{figure}

Representative numerical simulations of solutions of equation~\eqref{eq} are plotted in {Fig.~\ref{figure:solutions}, with initial function $\phi(t)=0.005(\cos(10t)+1)$ and parameter values $\rho=50,120,190$. As we can see, in a short time interval of length 10, the solutions appear to be periodic, with their periods approximately equal to the delay, despite the fact that we proved in Theorem \ref{attractivity} that all non-trivial solutions converge to the positive equilibrium. Indeed, viewing the solution on a larger time scale in Fig. \ref{figure:transient}, we can see that the solution is in fact not periodic, however the convergence is very slow. This is a situation that is sometimes called metastability. Paraphrasing \cite{holmes}, metastability refers to a situation when something appears not to change on short time scales, while it changes after a sufficiently long period of time. Such phenomena are common in boundary value problems of partial differential equations, and metastability has been observed for delay differential equations as well, see, for example,~\cite{erneux,grotta}. Long lasting transient oscillations have been reported in \cite{morozov} for a scalar differential equation with a constant delay. It is interesting to see that equation \eqref{eq} can exhibit rapid convergence for small $\rho$ and oscillatory patterns of various shapes for larger $\rho$, as illustrated in Fig.~\ref{figure:convergence} where solutions are plotted with different initial functions.


\begin{figure}
	\begin{center}
		\includegraphics[scale=0.44]{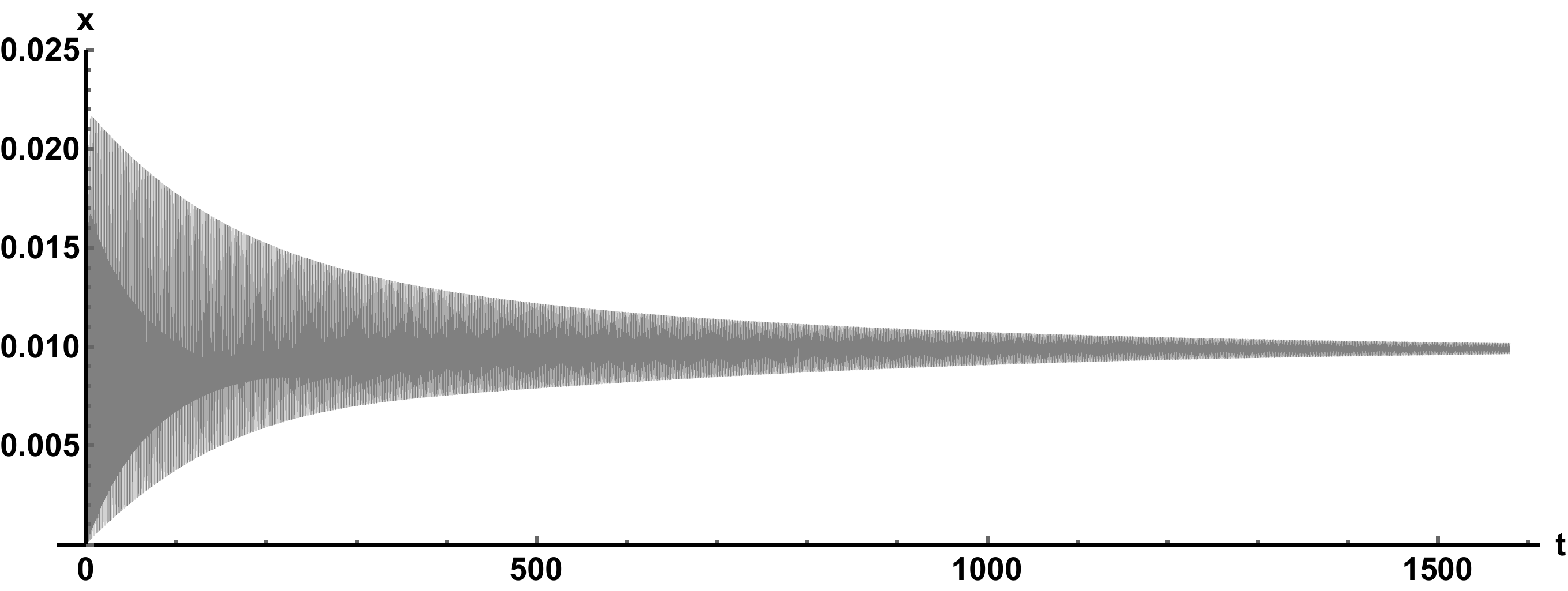} \\
		\includegraphics[scale=0.22]{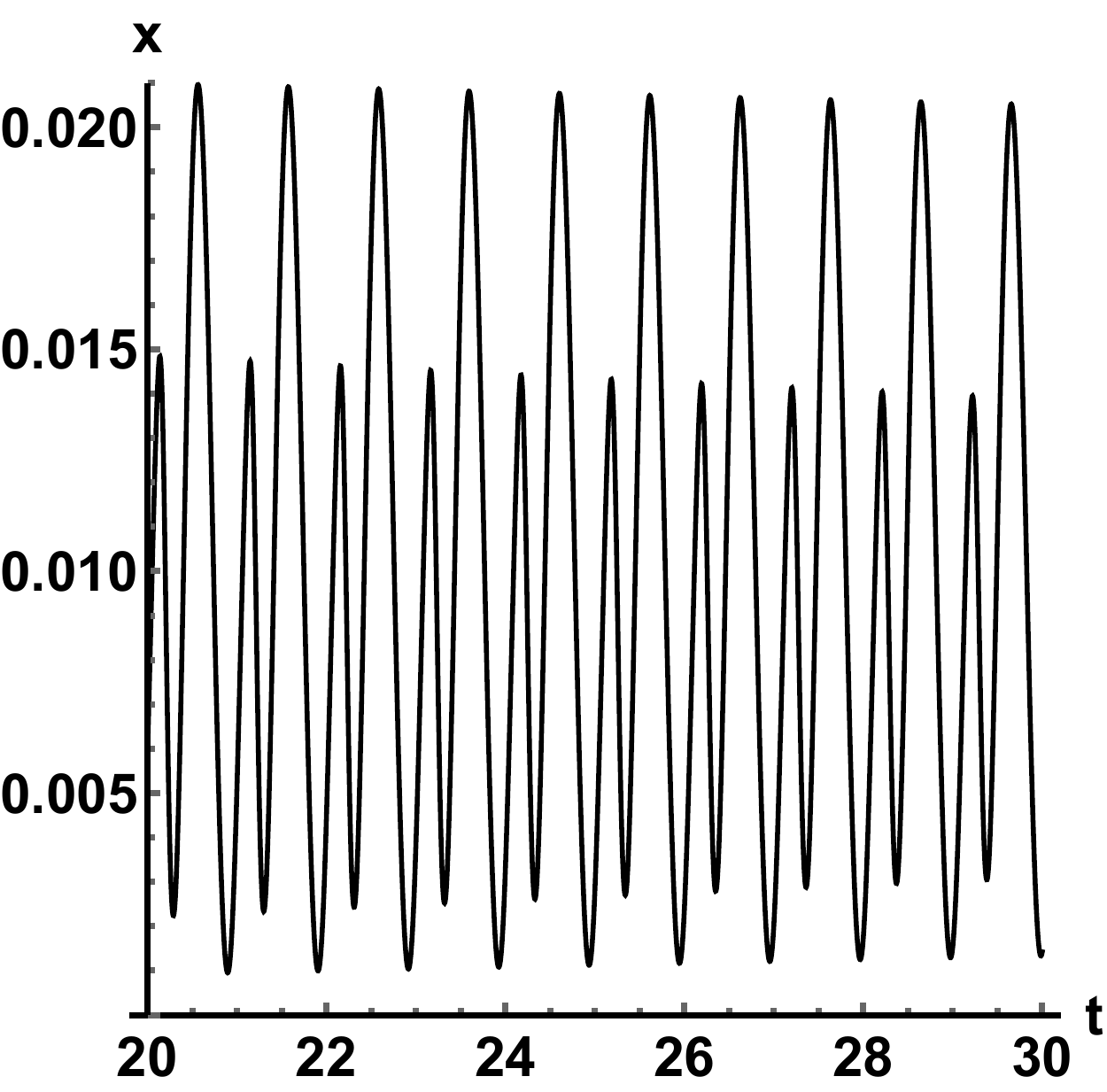} \,
		\includegraphics[scale=0.22]{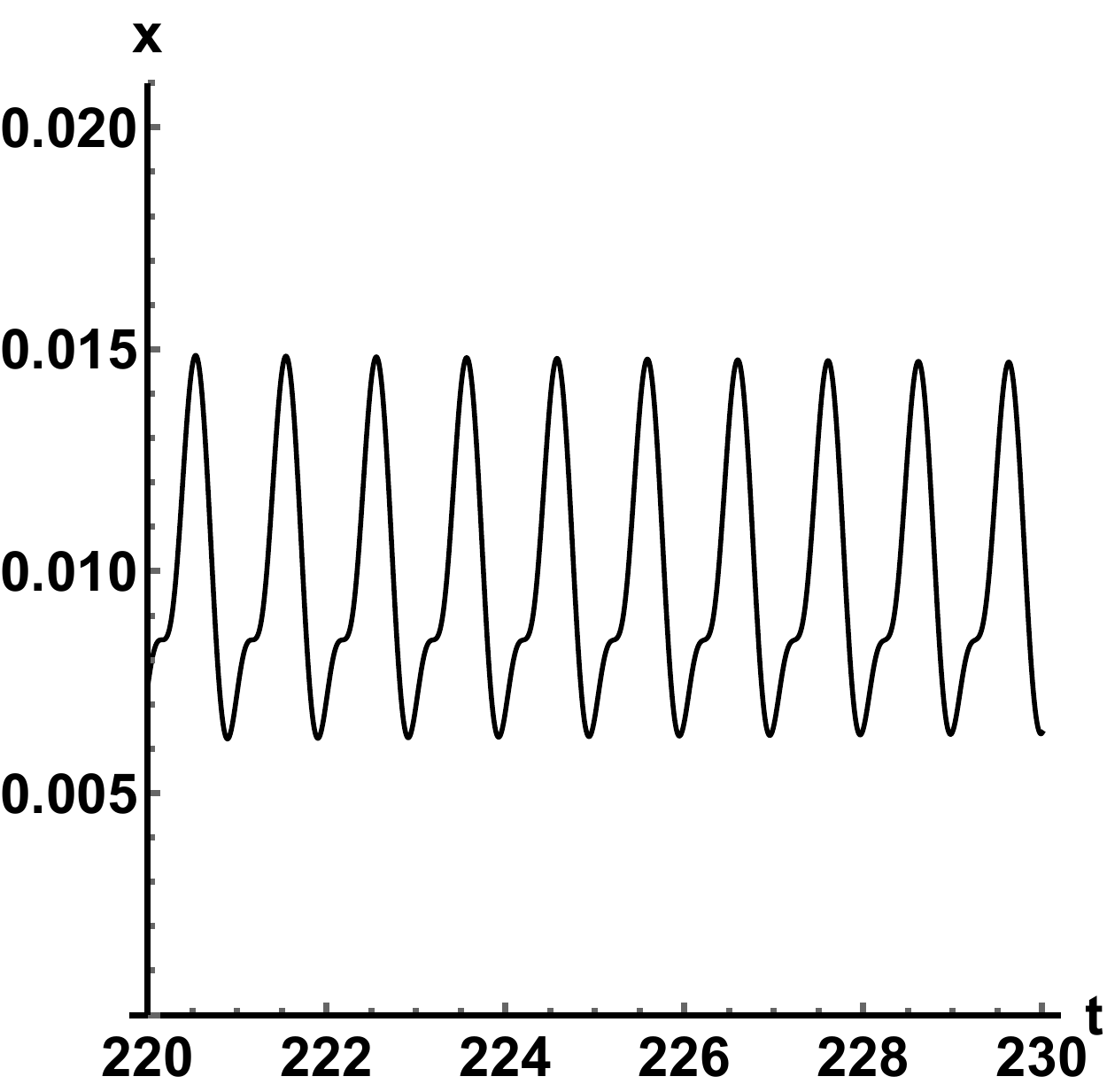} \, \includegraphics[scale=0.22]{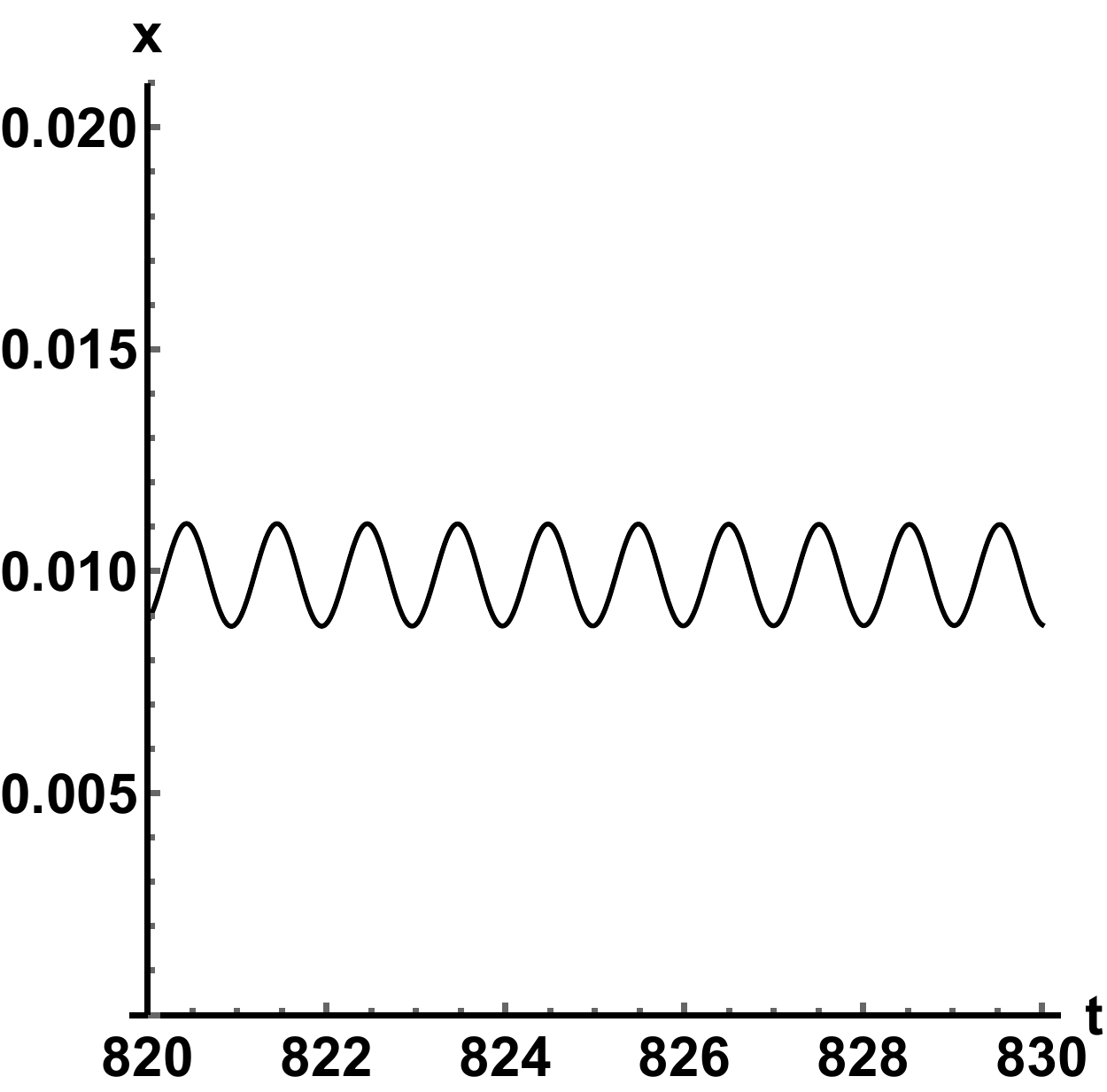} \,\includegraphics[scale=0.22]{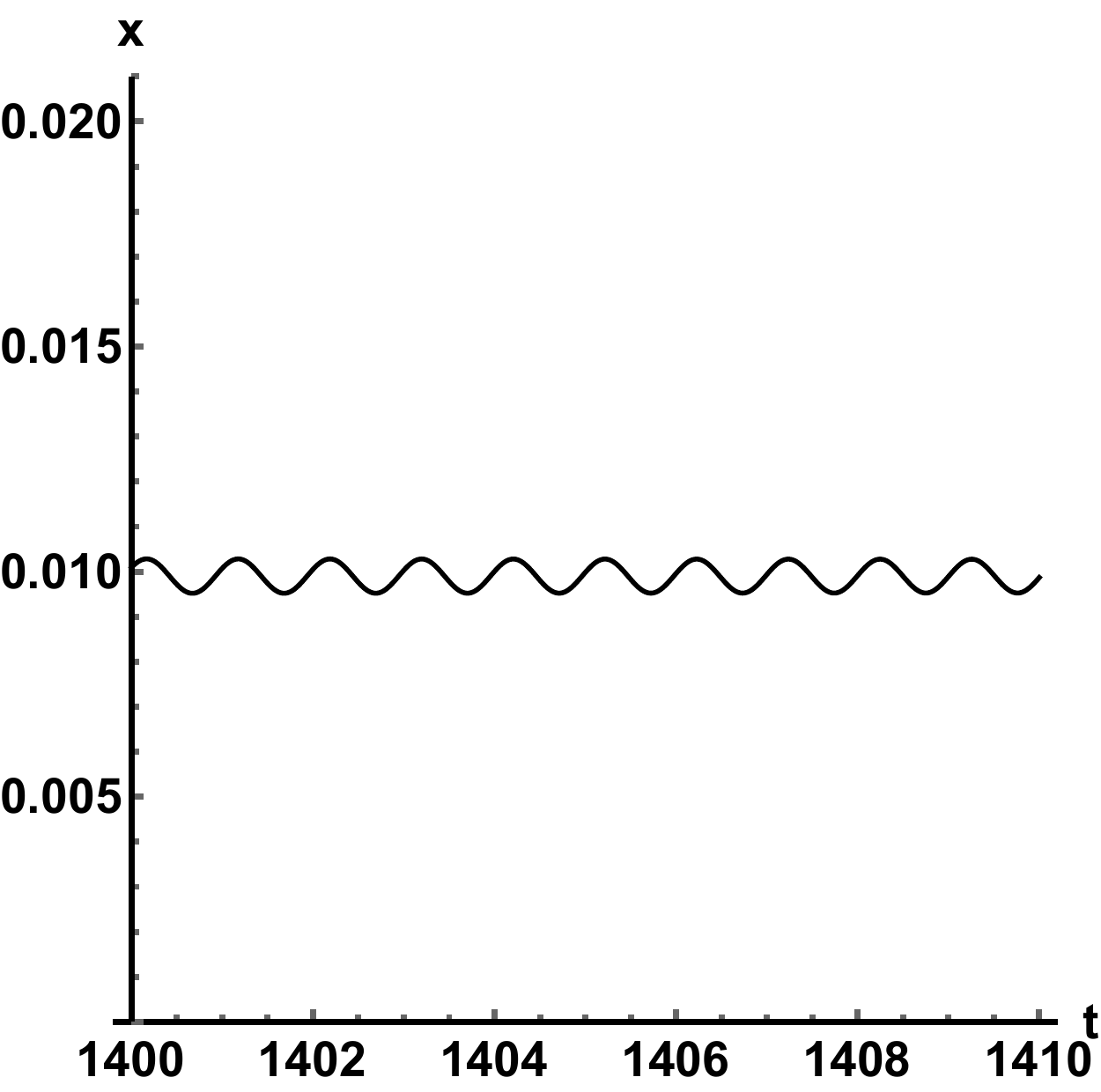}
	\end{center}
	\caption{Top: envelope of the solution with initial function $\phi(t)=0.005(\cos(10t)+1)$ and $\rho=100$, showing slow convergence to the positive equilbrium. Bottom: snapshots of the same solution over different time intervals. }
	\label{figure:transient}
\end{figure}


\begin{figure}
\begin{center}
	\includegraphics[scale=0.45]{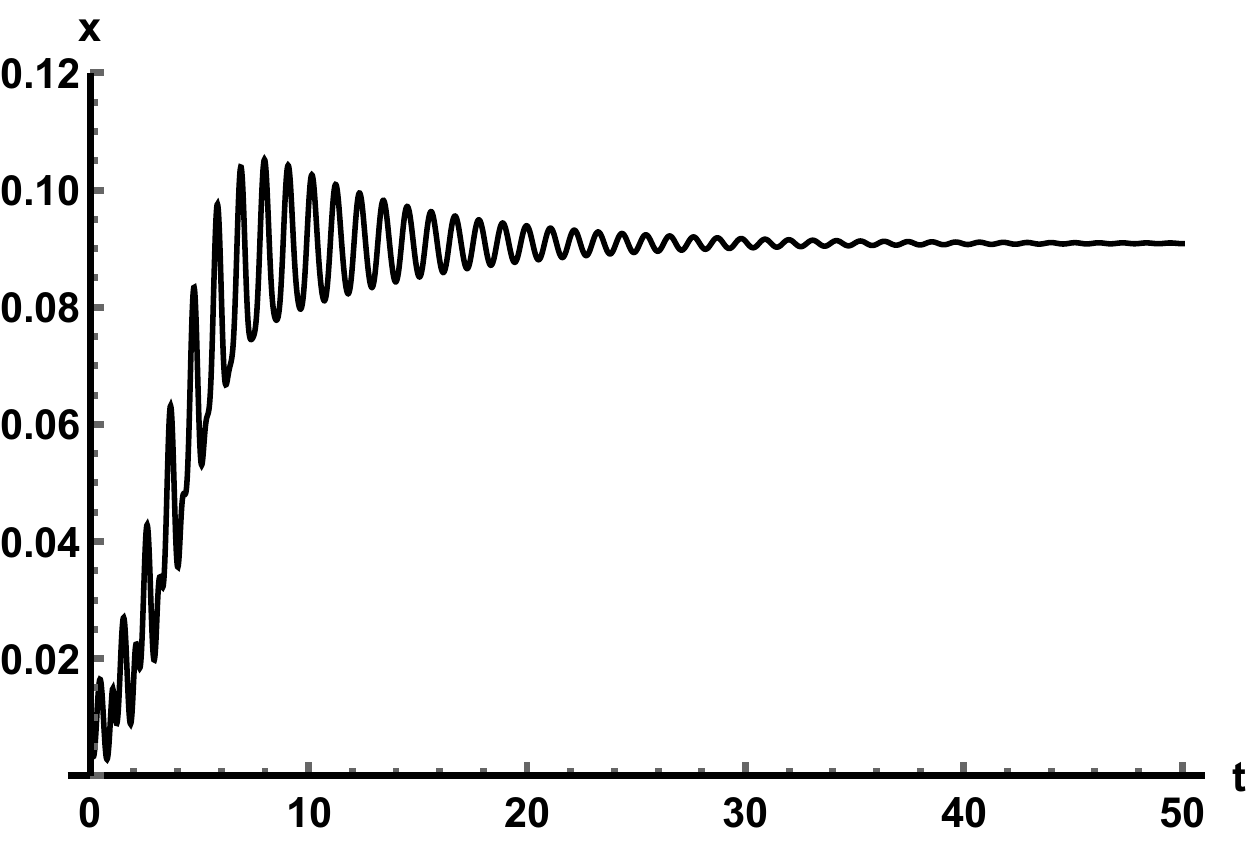} \quad 
	\includegraphics[scale=0.45]{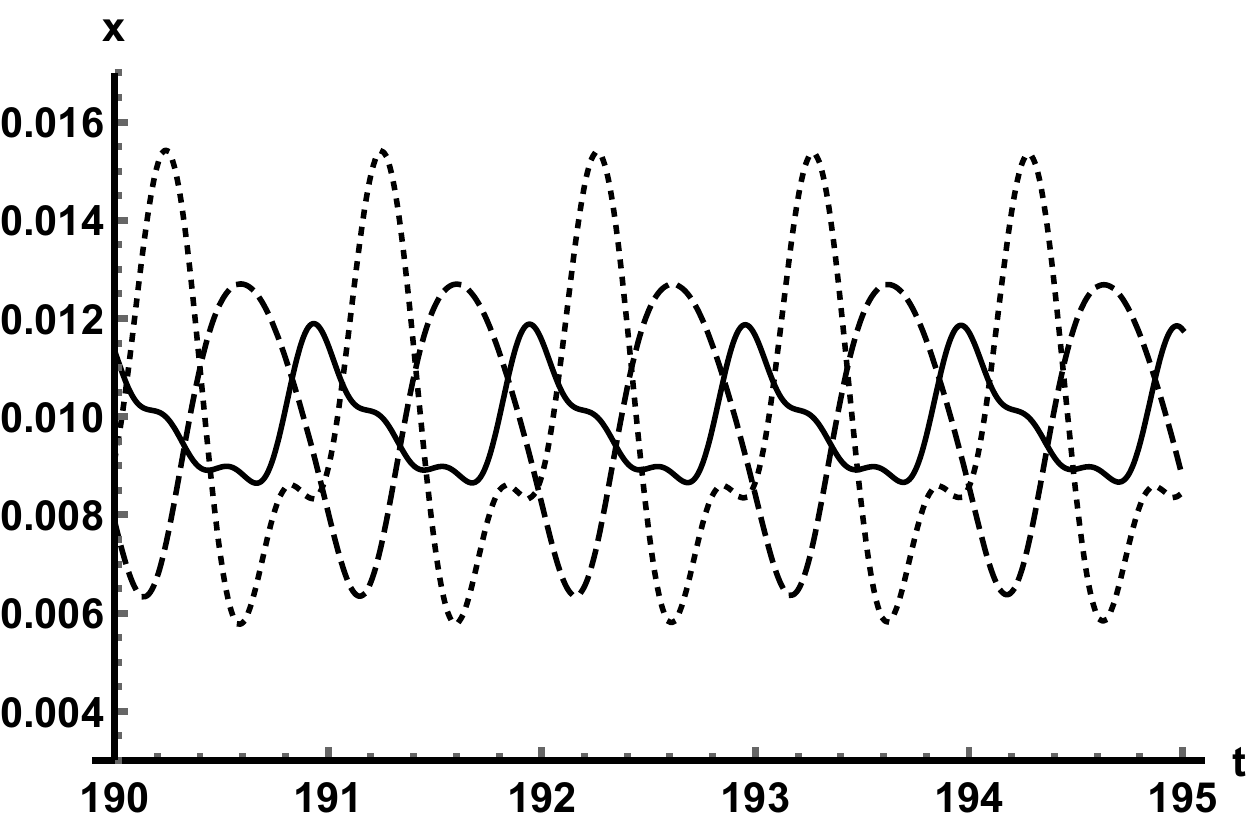}
	\end{center}
	\caption{Left: Convergence for $\rho=10$ with initial function $\phi(t)=0.005(\cos(10t)+1)$. Right: Various periodic patterns for $\rho=100$, emerging from initial functions $\phi(t)=0.005(\cos(10t)+1)$ (dotted), $\phi(t)=0.005(\cos(20t)+1)$ (solid), $\phi(t)=0.005(\cos(10t^4)+1)$ (dashed).}
	\label{figure:convergence}
\end{figure}


To gain some understanding of this phenomenon, we look at our model from the point of view of singular perturbations. In equation~\eqref{eq}, let us divide by $\rho^2$. For large $\rho$, we use the notation $\varepsilon=1/\rho$, then we have 	 	
\begin{equation} 
\varepsilon^2 x'(t)=\varepsilon\left[-x(t) + 2x(t-1)-x(t-1)x(t) \right]  - x(t-1)\int_{t-1}^t x(s) \,\text{d}s. \label{epseq}
\end{equation}
Setting $\varepsilon=0$, we obtain the singular perturbation
\begin{equation}
0=x(t-1)\int_{t-1}^t x(s)\,\text{d}s.
\end{equation}
Besides $x\equiv 0$, any periodic function with unit period that is on average zero is a solution of this equation, which may be a hint as to why we see long-lasting transient oscillations with a variety of shapes.

In addition, we can look at the singular perturbation of the characteristic equation: equation~\eqref{chareq} can be written as
\begin{equation} \lambda^2+\rho \left( 1+\frac{1}{\rho+1}\right)\lambda+\frac{\rho^2}{\rho+1}=\left(\frac{\rho^2}{\rho+1} +\rho\lambda\right)e^{-\lambda }. \end{equation}
Multiplying by $(\rho+1)/\rho^2$ we have
\begin{equation} 
\lambda^2\frac{\rho+1}{\rho^2}+ \frac{\rho+2}{\rho}\lambda+1=\left(1 +\lambda\frac{\rho+1}{\rho}\right)e^{-\lambda }, 
\end{equation} 
which is
\begin{equation} \lambda^2(\varepsilon^2+\varepsilon)+ (1+2\varepsilon)\lambda+1=\left(1 +\lambda (1+2\varepsilon)\right)e^{-\lambda }, \end{equation} 
with the notation $\varepsilon=1/\rho$. Setting $\varepsilon=0$, we obtain the singular perturbation
\begin{equation}  
\lambda+1=\left(1 +\lambda\right)e^{-\lambda }. 
\end{equation} 
The roots of this equation satisfy either $\lambda=-1$ or $e^{-\lambda }=1$, having the purely imaginary roots $i=2k\pi$, $k\in \Z$. Indeed, as we increase $\rho$, we can see that all complex roots line up along the imaginary axis, see Fig.~\ref{spectrumfig}.

\begin{figure}
\begin{center}
	\includegraphics[scale=0.31]{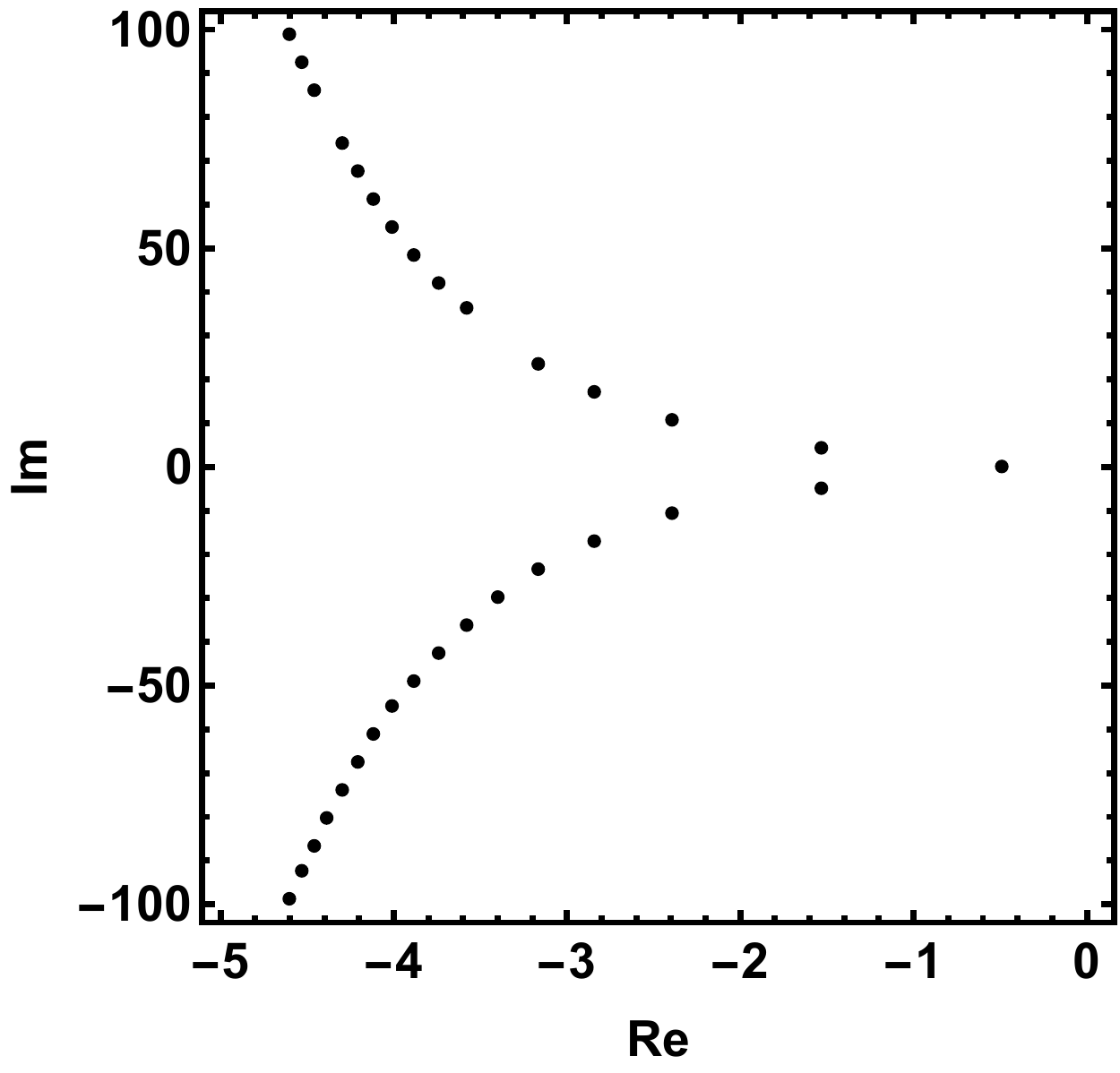} 
	\includegraphics[scale=0.31]{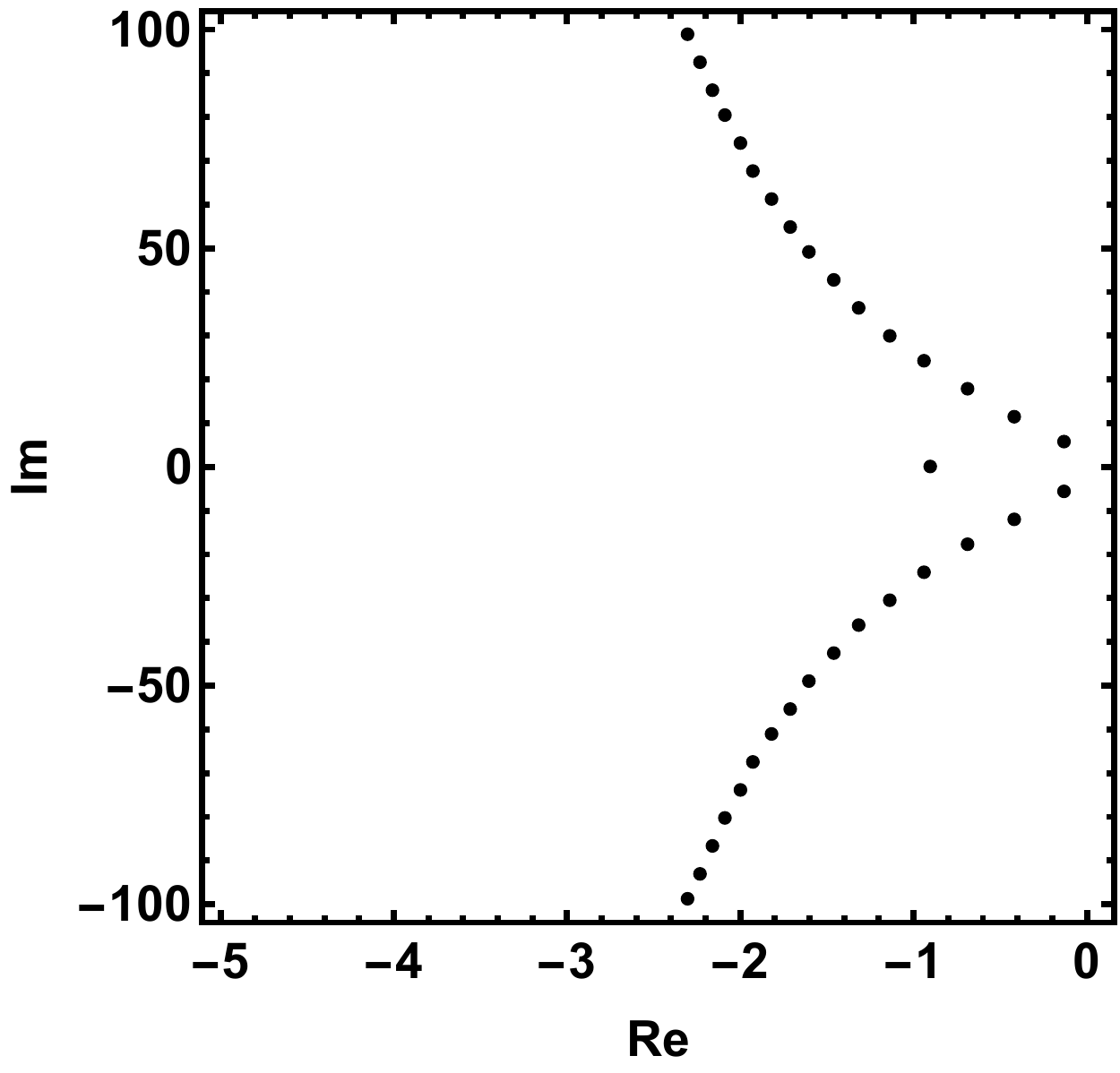}
	\includegraphics[scale=0.31]{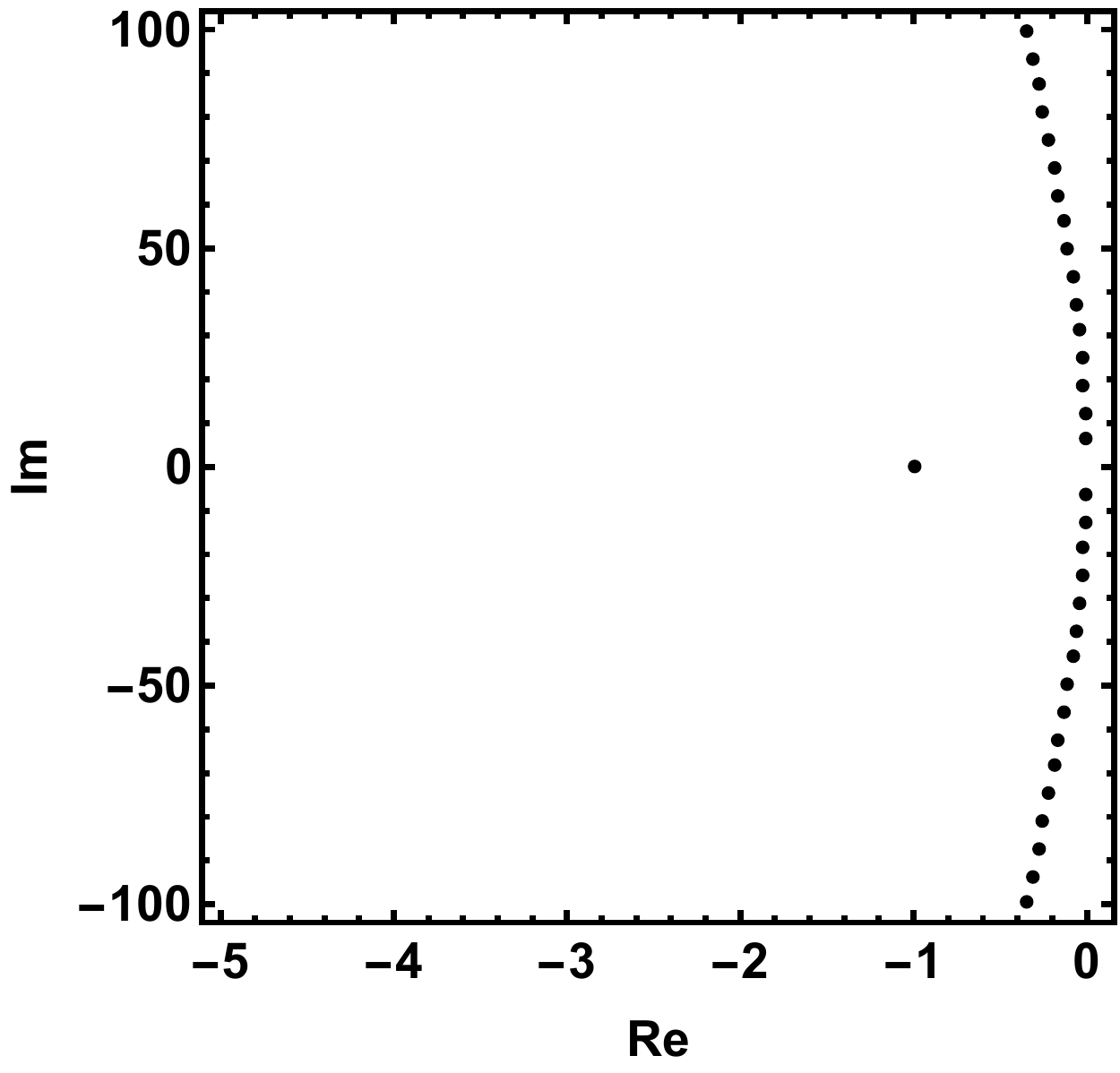}
	\end{center}
	\caption{Characteristic roots of \eqref{charint} on the complex plane for $\rho=1,10,100$. As $\rho \to \infty$, characteristic roots converge towards the imaginary axis.}
\label{spectrumfig}
\end{figure}


\section{Discussion}

The delayed logistic equation has received much attention in past decades in the analysis of nonlinear delay differential equations. However, its biological validity has been questioned, despite the fact that it was introduced by Hutchinson to explain observations of  oscillatory behaviour in ecological systems. Here, we introduced a new logistic-type delay differential equation, derived from the go-or-grow hypothesis which has been observed for some types of cancer cells. The equation naturally includes a product of delayed and non-delayed terms, as well as both discrete and distributed delays. Detailed mathematical analysis reveals that all non-zero solutions converge to the stable positive equilibrium. For an alternative formulation of the delayed logistic equation~\cite{arino}, the authors also concluded that, as opposed to Hutchinson's equation, sustained oscillations were not possible and solutions settled at an equilibrium. In some sense the dynamical behaviour of our model is in-between these two extremes. While we have proved that all positive solutions are attracted to the positive equilibrium, for a range of parameters the convergence is so slow that, on a biologically realistic time scale, solutions may appear periodic. These long-lasting transient patterns can have various shapes, as shown in Fig. \ref{figure:solutions}, and these shapes also change in time (see Fig. \ref{figure:transient}).


\begin{figure}
	\begin{center}
		\includegraphics[scale=0.28]{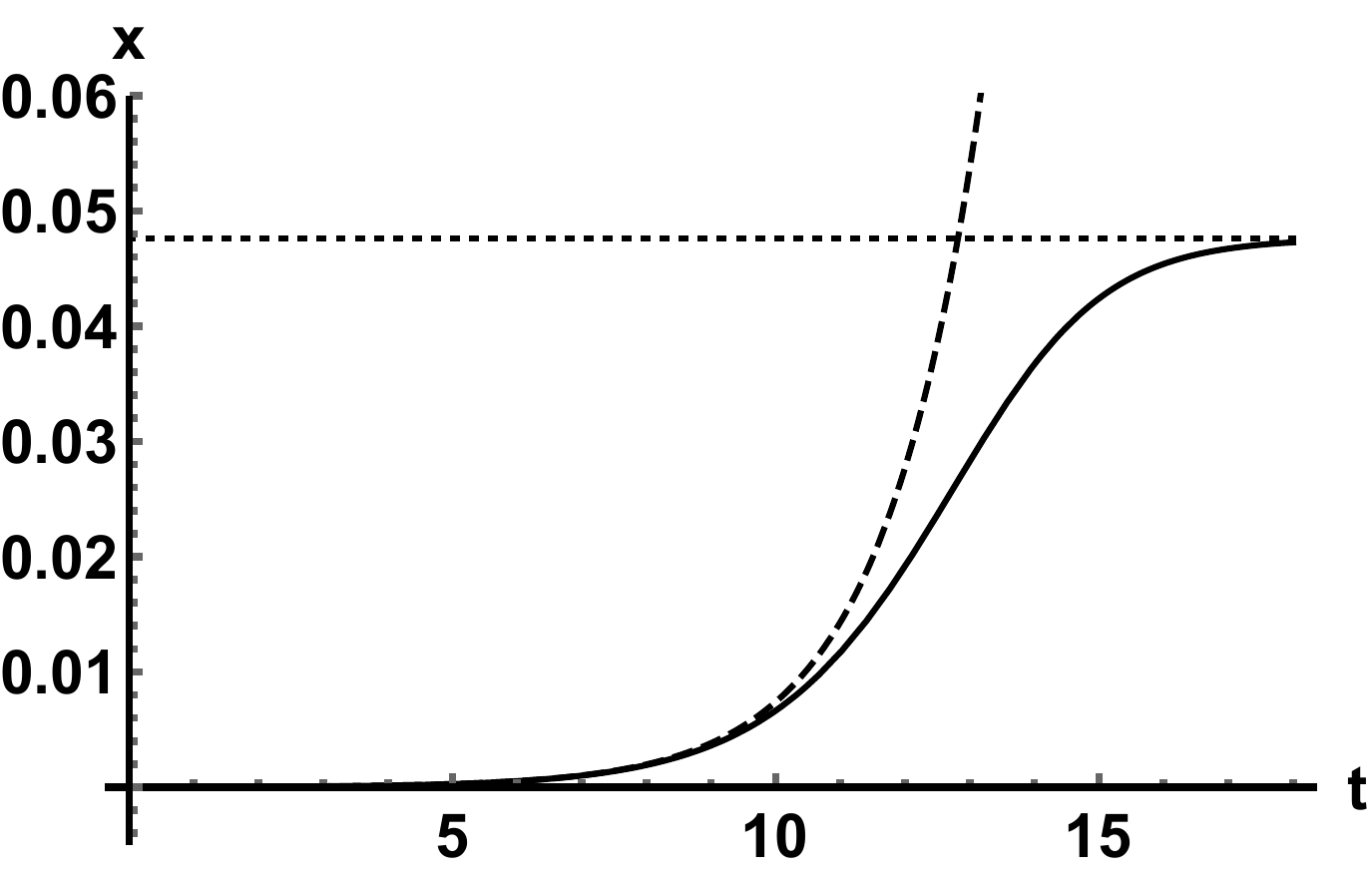} \,
		\includegraphics[scale=0.28]{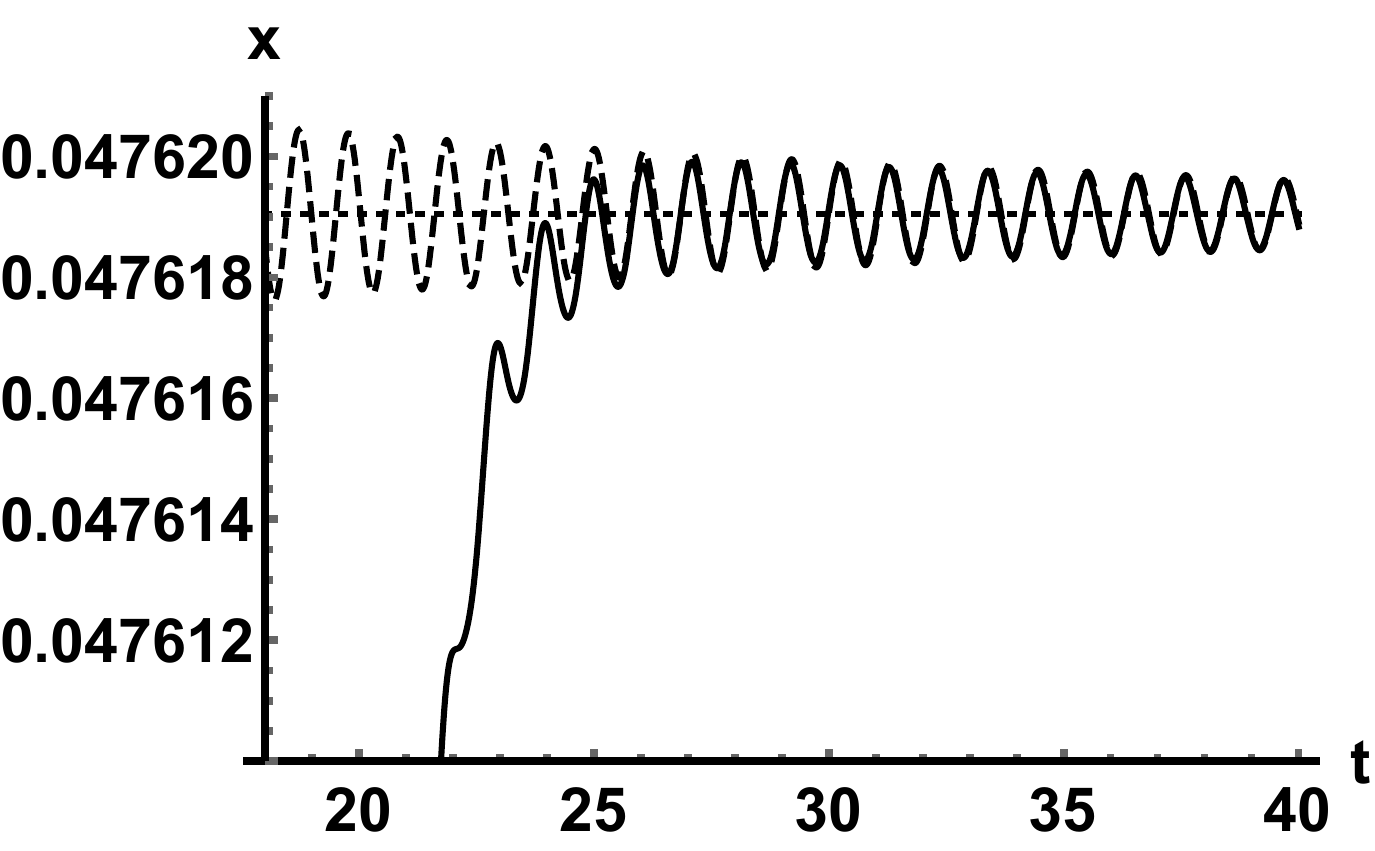} \,
		\includegraphics[scale=0.28]{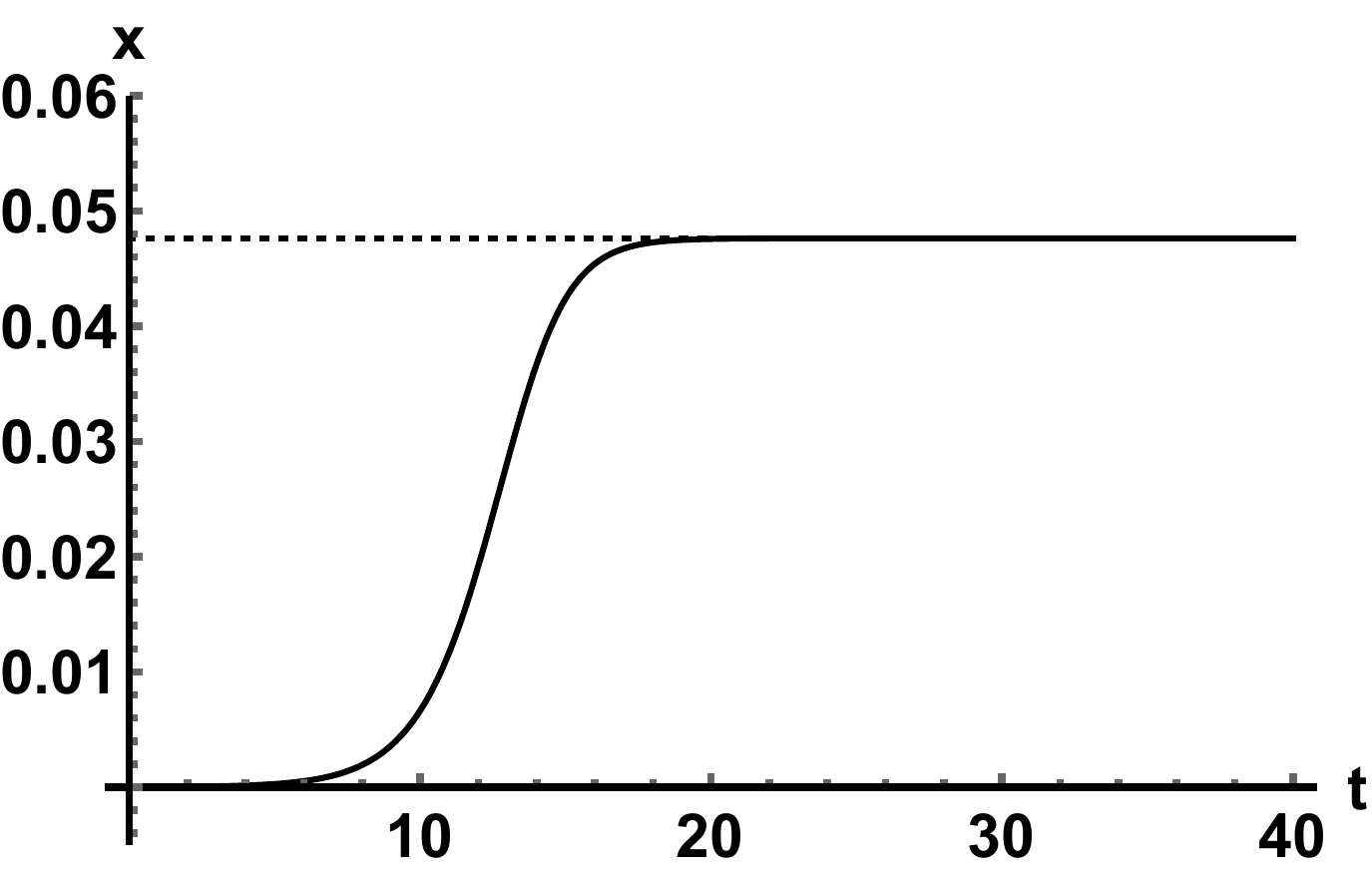}
	\end{center}
	\caption{Illustration of the unique positive heteroclinic orbit for $\rho=20$. Left: the initial phase of the solution (solid) is plotted alongside the exponential eigenfunction (dashed) corresponding to the leading real eigenvalue at zero. Center: we zoomed in to see the late part of the solution (solid), aligning nicely with the oscillatory pattern corresponding to the leading pair of complex eigenvalues at the positive equilbrium (dashed). Right: the heteroclinic solution is shown combining the two time scales. The dotted line is the positive equilbrium in all three figures.}
	\label{figure:hetero}
\end{figure}

We have also fully described the global attractor as the union of the two equilibria and a unique connecting orbit. This heteroclinic solution is depicted in Fig. \ref{figure:hetero} when $\rho=20$. In this case, the leading real eigenvalue at zero is $\lambda\approx 0.66$, while the leading pair of eigenvalues at the positive equilibrium are $\lambda \approx -0.04 \pm 6i$. We can numerically observe how the solution is aligned to the leading eigenspaces of the linearizations near the two equilibria.

Our model is based on the commonly-invoked modelling assumption that proliferating cells abort proliferation when placement of a daughter cell is not possible due to spatial crowding constraints. However, there are other potential models of crowding-limited proliferation that could be encoded within the same framework. For example, one could assume that, instead of aborting the proliferation attempt, proliferating cells instead enter a waiting state until space for the daughter cell becomes available, or that, in order to enter the proliferating state, cells must be able to ``reserve'' a site for the daughter cell to be placed into. These, and other possible biological hypotheses constitute a family of different logistic-type models, the behaviours of which we will systematically compare in subsequent works. 

As far as we are aware, this work represents the first step to understand the go-or-grow mechanism when a delay caused by the cell cycle length is explicitly incorporated as a biological parameter. Future work will explore more accurate models of the effects of spatial crowding upon proliferation, using both mean-field and moment dynamics models. In the transformed equation \eqref{eq}, the parameter $\rho$ is the product of the time delay, $\tau$, and the proliferation rate, $r$. As such, increasing either of those two parameters will have a similar effect on the dynamics (although on different time scales in terms of the original, dimensional equation). In models that include spatial details, however, there is an additional key parameter (that is ignored by equation~\eqref{eq}), namely the cell motility rate. An important future goal is to understand how the speed of \textit{e.g.} cancer invasion depends on cell-level parameters such as cell motility and proliferation rates, and the cell cycle length. To this end, spatial models of the go-or-grow mechanism that incorporate cell cycle delays need to be developed. Travelling wave solutions of reaction-diffusion systems with delays are closely related to the heteroclinic orbits of the reaction systems (see for example \cite{faria2}). As such, the results we provide on the unique heteroclinic orbit connecting the zero and the positive equilibria will help shed light on invasion speeds in these cases.

\section*{Acknowledgments} GR was supported by NKFI FK 124016 and MSCA-IF 748193. REB is a Royal Society Wolfson Research Merit Award holder and would like to thank the Leverhulme Trust for a Research Fellowship.


\begin{thebibliography}{99}

\bibitem{arino} Arino J., Wang L. and Wolkowicz G. S. (2006). An alternative formulation for a delayed logistic equation. Journal of Theoretical Biology, 241(1):109--119.

\bibitem{bacaer} Baca\"{e}r N. (2011). A short history of mathematical population dynamics. Springer Science \& Business Media.

\bibitem{Baker:2010:CMF} Baker R. E. and Simpson M. J. (2010). Correcting mean-field approximations for birth-death-movement processes. Physical Review E 82(4):e041905.

\bibitem{krisztin} B\'anhelyi B., Csendes T., Krisztin T. and Neumaier A. (2014). Global attractivity of the zero solution for Wright's equation. SIAM Journal on Applied Dynamical Systems, 13(1):537--563.

\bibitem{chow} Chow S.-N. and Mallet-Paret J. (1977). Integral averaging and bifurcation, Journal of Differential Equations 26:112--0159.

\bibitem{dvvw} Diekmann O., Van Gils S. A., Lunel S. M. and Walther, H. O. (2012). Delay equations: functional, complex, and nonlinear analysis (Vol. 110). Springer Science \& Business Media.

\bibitem{erneux} Erneux T. (2009). Applied delay differential equations (Vol. 3). Springer Science \& Business Media.

\bibitem{faria} Faria T. (2006). Asymptotic stability for delayed logistic type equations. Mathematical and computer modelling, 43(3-4), 433--445.

\bibitem{fhw}  Faria T., Huang W., and Wu J. (2006). Travelling waves for delayed reaction-diffusion equations with global response. Proceedings of the Royal Society of London A: Mathematical, Physical and Engineering Sciences 462(2065):229--261. 

\bibitem{fhwmanifold} Faria T., Huang W., and Wu J. (2002). Smoothness of center manifolds for maps and formal adjoints for semilinear FDEs in general Banach spaces. SIAM Journal on Mathematical Analysis, 34(1):1730-203.

\bibitem{faria2} Faria T. and Trofimchuk S. (2006). Nonmonotone travelling waves in a single species reaction-diffusion equation with delay. Journal of Differential Equations, 228(1):357--376.

\bibitem{farin} Farin A., Suzuki S. O., Weiker M., Goldman J. E., Bruce J. N., and Canoll P. (2006). Transplanted glioma cells migrate and proliferate on host brain vasculature: a dynamic analysis. Glia, 53(8):799--808.

\bibitem{fowler} Fowler A. C. (1982). An asymptotic analysis of the delayed logistic equation when the delay is large. IMA Journal of Applied Mathematics, 28(1):41--49.

\bibitem{GeritzKisdi} Geritz S.A. and Kisdi \'E. (2012). Mathematical ecology: why mechanistic models? Journal of Mathematical Biology, 65(6):1411--1415.

\bibitem{giese} Giese A., Bjerkvig R., Berens M. E. and Westphal M. (2003). Cost of migration: invasion of malignant gliomas and implications for treatment. Journal of Clinical Oncology 21:1624--1636.

\bibitem{gopalsamy} Gopalsamy K., and Zhang B. G. (1988). On a neutral delayed logistic equation. Dynamics and Stability of Systems, 2(3-4):183--195.

\bibitem{grotta} Grotta-Ragazzo C., Malta C. P., and Pakdaman K. (2010). Metastable periodic patterns in singularly perturbed delayed equations. Journal of Dynamics and Differential Equations, 22(2):203--252.

\bibitem{gurney}  Gurney W. S. C., Blythe S. P. and Nisbet, R. M. (1980). Nicholson's blowflies revisited. Nature, 287(5777):17--21.

\bibitem{gynr} Gy\H ori I., Nakata Y., R\"ost G. (2018). Unbounded and blow-up solutions for a delayed logistic equation with positive feedback. Communications on Pure and Applied Analysis 17(6):2845--2854.

\bibitem{gyoripituk} Gy\H ori I., Pituk M. (1995). $L^2$-perturbation of a linear delay differential equation. Journal of Mathematical Analysis and Applications, 195:415--427.

\bibitem{hale88} Hale J. K. 1988. Asymptotic behavior of dissipative systems.
Mathematical Surveys and Monographs, vol. 25. Providence, RI: American Mathematical Society.

\bibitem{hassard} Hassard B.D., Kazarinoff N.D., Wan Y.H. (1981). Theory and applications of Hopf bifurcation (Vol. 41). CUP Archive.

\bibitem{holmes} Holmes, M. H. (2012). Introduction to perturbation methods. Vol. 20. Springer Science \& Business Media.

\bibitem{hutchinson} Hutchinson G. E. (1948). Circular causal systems in ecology. Annals of the New York Academy of Sciences, 50:221--246.

\bibitem{ivanov} Ivanov A., Liz E., and Trofimchuk S. (2002). Halanay inequality, Yorke 3/2 stability criterion, and differential equations with maxima. Tohoku Mathematical Journal, Second Series 54(2):277--295.

\bibitem{kww} Krisztin, T., Walther, H. O. and Wu, J. (1999). Shape, smoothness, and invariant stratification of an attracting set for delayed monotone positive feedback (Vol. 11). American Mathematical Society.

\bibitem{kuang} Kuang Y. (1993). Delay differential equations: with applications in population dynamics (Vol. 191). Academic Press.

\bibitem{lessard} Lessard J. P. (2010). Recent advances about the uniqueness of the slowly oscillating periodic solutions of Wright's equation, Journal of Differential Equations 248(5):992--1016.

\bibitem{lin} Lin C. J., Wang L. and  Wolkowicz G. S. (2018). An alternative formulation for a distributed delayed logistic equation. Bulletin of Mathematical Biology, 80(7):1713--1735.

\bibitem{lindstrom} Lindstr\"om T. (2017). Monotone dynamics or not? : Dynamical consequences of various mechanisms for delayed logistic growth. Differential Equations and Applications 9:379--382. 

\bibitem{liz} Liz E. (2014). Delayed logistic population models revisited. Publicacions Matem\`atiques, 309--331.

\bibitem{may} May R. M. (1973). Stability and complexity in model ecosystems. Princeton University Press.

\bibitem{morozov} Morozov A. Y., Banerjee M. and Petrovskii S. V. (2016). Long-term transients and complex dynamics of a stage-structured population with time delay and the Allee effect. Journal of Theoretical Biology, 396:116--124.

\bibitem{levchenko} Noren D. P., Chou W. H., Lee S. H., Qutub A. A., Warmflash A., Wagner D. S., Popel S. P., and Levchenko A. (2016). Endothelial cells decode VEGF-mediated Ca2+ signaling patterns to produce distinct functional responses. Science Signalling, 9(416):ra20.

\bibitem{nussbaum} Nussbaum R. D. (1975). A global bifurcation theorem with applications to functional differential equations. Journal of Functional Analysis 19:319--338.

\bibitem{ruan} Ruan S. (2006). Delay differential equations in single species dynamics. Delay Differential Equations and Applications, 477--517. Springer, Dordrecht.

\bibitem{smith} Smith H. L. (2011). An introduction to delay differential equations with applications to the life sciences (Vol. 57). New York: Springer.

\bibitem{vBJ} van den Berg J. B. and Jaquette J. (2018). A proof of Wright's conjecture. Journal of Differential Equations, 264(12):7412--7462.

\bibitem{wright}  Wright E. M. (1955). A non-linear difference-differential equation. Journal f\"ur die Reine und Angewandte Mathematik, 194:66--87.

\bibitem{yan} Yan X., and Shi J. (2017). Stability switches in a logistic population model with mixed instantaneous and delayed density dependence. Journal of Dynamics and Differential Equations, 29(1):113--130.

\bibitem{zou} Zou X. (2002). Delay induced traveling wave fronts in reaction-diffusion equations of KPP-Fisher type. Journal of Computational and Applied Mathematics, 146(2):309--321.

\end{thebibliography}
\end{document}